\renewcommand\section{\@startsection {section}{1}{\z@}%
                                   {-3.5ex \@plus -1ex \@minus -.2ex}%
                                   {2.3ex \@plus.2ex}%
                                   {\centering\normalfont\bf}}
 \numberwithin{equation}{section}
\numberwithin{equation}{section}
\numberwithin{equation}{section}
\theoremstyle{plain}
\newtheorem{thm}{Theorem}[section]
\newtheorem{lemma}[thm]{Lemma}
\newtheorem{pro}[thm]{Proposition}
\newtheorem*{thm*}{Theorem}
\begin{document}
\title{Connectedness of self-affine sets with product digit sets}
\author{Jing-Cheng Liu$^{1}$, Jun Jason Luo$^{2}$ and Ke Tang$^{1}$}
\address{1 Key Laboratory of High Performance Computing and Stochastic Information Processing (Ministry of Education of China), College of Mathematics and
Computer Science, Hunan Normal University, Changsha, Hunan 410081, China} \email{liujingcheng11@126.com}
\address{2 College of Mathematics and Statistics, Chongqing University, Chongqing 401331, China}\email{jun.luo@cqu.edu.cn}

\date{\today}
\keywords {Connectedness, self-affine set, self-affine tile, tile digit set.}
\subjclass[2010]{Primary 28A80; Secondary 52C20, 52C45.}
\thanks{The research is supported in part by the NNSF of China (No.11571104, No.11301175, No.11301322), the Hunan Provincial NSF (No.13JJ4042),
the program for excellent talents in Hunan Normal University (No.ET14101), Specialized Research Fund for the Doctoral Program of Higher Education of China (20134402120007), Foundation for Distinguished Young Talents in Higher Education of Guangdong (2013LYM\_0028).}

\begin{abstract}
Let $T(A,\mathcal{D})$ be a self-affine set generated by an expanding matrix $A=\begin{pmatrix}
 p& 0 \\
 -a&q
\end{pmatrix}$ and a product digit set $\mathcal{D}=\{0,1,\dots,m-1\}\times \{0,1,\dots,n-1\}$. We provide a necessary and sufficient condition for the $T(A,\mathcal{D})$ to be connected, which generalizes the known results.
\end{abstract}

\maketitle

\section{\bf Introduction}
Let $A$ be an $n\times n$ expanding matrix (i.e., all its eigenvalues have moduli $> 1$), and let ${\mathcal D}\subset {\mathbb R}^n$ be a finite set. It is well known  that there is a unique nonempty compact set $T:=T(A,\mathcal{D})$ \cite{H81} satisfying the set-valued functional equation
\begin{equation}\label {eq 1.2}
T=\bigcup_{d\in {\mathcal D}}A^{-1}(T+d).
\end{equation}
More explicitly, $T$ can be given by the radix expansions:
\begin{equation}\label {eq 1.3}
T=\left\{\sum_{i\ge 1} A^{-i}d_{j_i}: d_{j_i}\in \mathcal{D}\right\}.
\end{equation}

We call $\mathcal D$ a {\it digit set} and $(A, {\mathcal D})$ an {\it affine pair}. Then $T$ is usually called a {\it self-affine set} generated by $(A, {\mathcal D})$. Furthermore, if $\#{\mathcal D}=|\det(A)|$ is an integer and $T$ has positive Lebesgue measure, then $T$ can tile ${\mathbb R}^n$ in the sense: there exists a discrete set $\mathcal{J}\subset \mathbb{R}^n$ such that $T+\mathcal {J}=\mathbb{R}^n$ and $(T^{\circ} +t)\cap(T^{\circ} +s)=\emptyset$ for all distinct $t, s \in \mathcal {J}$. We call such $T$ a {\it self-affine tile} and ${\mathcal D}$ a {\it tile digit set}.

The fundamental theory of self-affine tiles was established by Kenyon \cite{Ke92} and Lagarias and Wang \cite{LW1,LW2,LW3} in the last 90's. Since then, It has been a hot topic to study self-affine tiles and related fields. For example, people studied the fractal structure of their boundaries (\cite{LLu3},\cite{SW99},\cite{V98}),  their dynamical properties \cite{So97} and their applications to wavelets (\cite{BJ99},\cite{St93},\cite{W02}). Especially, one of the interesting aspects is the topological properties of self-affine tiles, such as connectedness (\cite{AG04},\cite{AT04},\cite{GH94},\cite{HSV94},\cite{KL00},\cite{KLR04}); disk-likeness (\cite{BG94},\cite{BW01},\cite{DL11},\cite{K10},\cite{LL07},\cite{LuLu}); local connectedness \cite{Lu07}. On the other hand, the self-affine sets arise in fractal geometry, as a class of important  fractal sets, to investigate  their topological structure  is also a basic and central theme (\cite{AG04},\cite{H85},\cite{LuLR13},\cite{LLu1},\cite{LLu2},\cite{LLX15},\cite{LRT02},\cite{MDD14},\cite{NT1},\cite{NT2}).

Among the above studies of the topology of self-affine tiles $T(A, {\mathcal D})$, people mainly focus on  a kind of consecutive collinear digit sets, that is similar to the one-dimensional case, i.e., ${\mathcal D}=\{0,1,\dots, \ell\}v$ where $v\in {\mathbb R}^n\setminus\{0\}$ (see \cite{AG04},\cite{AL10},\cite{KL00},\cite{KLR04},\cite{LL07}). In an attempt to consider the tiles generated by more general digit sets, recently,
 Deng and Lau \cite{DL11} tried a class of simple tiles $T(A, {\mathcal D})$ where $A$ is a lower triangular and $\mathcal D$ is arranged into a rectangular form. They proved an interesting result:

\begin{thm} [\cite{DL11}]\label{DL-thm}
 Let $p,q\in\mathbb{Z}$ with  $|p|, |q|\geq 2$ and $a\in \mathbb{R}$. Let
  \begin{equation*}
 A=\begin{pmatrix}
 p& 0 \\
 -a&q
\end{pmatrix}, \quad  \mathcal {D}=\left\{\dbinom{i}{j}: 0\leq i\leq |p|-1, 0\leq j\leq |q|-1\right\}.
\end{equation*}
 Then $T$ is a self-affine tile, and it is connected if and only if $|a|\le |{q(q-{\rm{sgn}}(p))}|$,  where $\rm{sgn}(p)$ denotes the sign of $p$.
\end{thm}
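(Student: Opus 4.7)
My plan is to recast the topological question as a finite graph problem via Hata's criterion: $T$ is connected if and only if the \emph{digit graph} $G(\mathcal D)$, whose vertices are the digits and whose edges join $d,d'$ whenever $d-d'\in\mathcal E(T):=\{v\in\mathbb R^2:T\cap(T+v)\ne\emptyset\}$, is connected. That $T$ is a tile follows from Lagarias--Wang type considerations once one checks that $\mathcal D$ is a complete set of coset representatives modulo the $\mathbb Z$-span of the columns of $A$: from $(i_1-i_2,j_1-j_2)^T=\alpha(p,-a)^T+\beta(0,q)^T$ with $|i_1-i_2|<|p|$ one obtains $\alpha=0$ and then $\beta=0$. (One also sees directly, via the slice description below, that $|T|=1$.)

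The first simplification is that $A$, and hence each $A^{-n}$, is lower triangular, so the first coordinate of a radix expansion depends only on the first coordinates of the digits. Consequently the horizontal projection $\pi_1(T)$ is the classical one-dimensional self-affine set with base $p$ and digits $\{0,1,\dots,|p|-1\}$, which is an interval of length $1$. It follows that every $(m,k)^T\in\mathcal E(T)$ has $|m|\le 1$, so edges of $G(\mathcal D)$ can only link digits in the same column or in adjacent columns. The vertical edges are immediate: the radix expansion with every digit equal to $(0,q-1)^T$ converges to $(0,1)^T\in T$, and since $0\in T$ one has $(0,1)^T\in T\cap(T+(0,1)^T)$, so $(0,\pm 1)^T\in\mathcal E(T)$; hence each column of $\mathcal D$ is already intra-connected in $G(\mathcal D)$, and the theorem reduces to deciding for which integers $k$ with $|k|\le|q|-1$ the vector $(1,k)^T$ lies in $\mathcal E(T)$.

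The core calculation is the explicit description of the extremal vertical slices $T_0:=T\cap\{x_1=0\}$ and $T_1:=T\cap\{x_1=1\}$. Taking $p,q>0$ for concreteness, the constraint $x_1=0$ forces every digit first-coordinate to vanish, yielding $T_0=\{0\}\times[0,1]$. The constraint $x_1=1$ forces every digit first-coordinate to equal $p-1$ (the unique maximizing choice); using the closed form
\[
A^{-n}=\frac{1}{p^nq^n}\begin{pmatrix}q^n & 0\\ a s_n & p^n\end{pmatrix},\qquad s_n=\frac{p^n-q^n}{p-q},
\]
together with the telescoping identity $(p-1)\sum_{n\ge1}s_n/(p^nq^n)=1/(q-1)$, one obtains $T_1=\{1\}\times[\tfrac{a}{q-1},\tfrac{a}{q-1}+1]$. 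Since $T$ and $T+(1,k)^T$ meet only on the line $x_1=1$, we conclude that $(1,k)^T\in\mathcal E(T)$ iff the intervals $[k,k+1]$ and $[\tfrac{a}{q-1},\tfrac{a}{q-1}+1]$ overlap, i.e.\ $|k-\tfrac{a}{q-1}|\le 1$.

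Putting the pieces together, $G(\mathcal D)$ is connected iff some integer $k$ with $|k|\le q-1$ satisfies $|k-\tfrac{a}{q-1}|\le 1$, which a short elementary check shows is equivalent to $|a|\le q(q-1)$ --- the claimed bound when $\mathrm{sgn}(p)=1$. The other sign cases follow by the same program: for $p<0$ the extreme value of $\pi_1(T)$ becomes $1/(|p|+1)$ in place of $1$, shifting the relevant slices and producing the bound $|a|\le|q|(|q|+1)$, while signs of $q$ are absorbed by the symmetry of the second-axis tile; together these amount to $|a|\le|q(q-\mathrm{sgn}(p))|$. The principal technical obstacle I foresee is the explicit evaluation of $T_1$ --- verifying that the extremal first-coordinate constraint really pins down the digit sequence uniquely and then accurately summing the doubly-indexed series $\sum s_n/(p^nq^n)$ --- together with the careful sign bookkeeping needed to merge the four sign regimes; once those calculations are in hand, the rest of the argument is combinatorial bookkeeping on $G(\mathcal D)$.
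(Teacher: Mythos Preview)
Your approach is sound, but note that the paper does not itself prove this theorem: it is quoted from Deng--Lau \cite{DL11} as background, and the paper's own contribution is the generalization in Theorem~\ref{th(1.1)} where $|p|+1<m<2|p|-1$ (so in particular $m\neq|p|$). There is therefore no ``paper's proof'' of this exact statement to compare against. That said, your slice argument is correct: when $m=|p|$ the extremal horizontal constraint $x_1=1$ (or its analogue for $p<0$) pins down the first-coordinate digit string uniquely, so the boundary slices $T_0$ and $T_1$ are genuine intervals and the adjacency test reduces to the overlap of two unit intervals. The telescoping identity $(p-1)\sum_{n\ge1}r_n=1/(q-1)$ is the right computation (and remains valid at $p=q$ via $r_n=n/q^{n+1}$), and the final equivalence with $|a|\le q(q-1)$ is an easy check.

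It is worth contrasting this with the machinery the paper develops for its own theorem. In the regime $m>|p|+1$ the projection $\pi_1(T)$ has length $>1$, so the constraint $x_1=1$ no longer determines the digit string: many sequences $(a_k)\in\Delta E_m$ satisfy $\sum a_kp^{-k}=1$. The paper therefore introduces the sets $\mathcal A,\mathcal B$ to parametrize these ambiguities and the auxiliary sets $\mathcal S,\mathcal Q,\mathcal S',\mathcal Q'$ to track the resulting second-coordinate contributions, computing their extrema in Propositions~\ref{th(2.2)}--\ref{th(2.3)} and feeding these into Lemmas~\ref{th(3.2)}--\ref{th(3.5)}. Your direct geometric argument is exactly what this collapses to when $m=|p|$ (the sets $\mathcal A$ and $\mathcal B$ become singletons), which is why it is considerably shorter; conversely, your method does not generalize to $m\neq|p|$ without reintroducing something like $\mathcal A$ and $\mathcal S$. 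The only place I would urge care is the sign bookkeeping for $p<0$: the relevant extremal slice sits at $x_1=1/(|p|+1)$ with the alternating digit string $(0,|p|-1,0,|p|-1,\dots)$, and one must redo the series $(|p|-1)\sum_{k\ge1}r_{2k}$ rather than simply appeal to ``symmetry''; the computation is routine but not identical to the $p>0$ case.
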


By exchanging the column and row of the above digit set $\mathcal D$, a sufficient condition was also obtained by Ma {\it et al.}:

\begin{thm}[\cite{MDD14}] \label{MDD-thm}
Let $p,q\in\mathbb{Z}$ with  $3\leq |p|+1<|q|<2|p|-1$ and $a\in\mathbb{R}$. Let
\begin{equation*}
 A=\begin{pmatrix}
 p& 0 \\
 -a&q
\end{pmatrix}, \quad \mathcal{D}=\left\{\dbinom{i}{j}: 0\leq i\leq |q|-1, 0\leq j\leq |p|-1\right\}.
\end{equation*}
If $q^2-|pq|\leq |a| \leq\frac{q^2(|p|-1)}{|q|-2}$, then $T$ (may be not a tile) is connected.
\end{thm}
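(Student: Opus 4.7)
The plan is to invoke the standard Hata--Kirat--Lau connectedness criterion: $T=T(A,\mathcal D)$ is connected if and only if the \emph{digit graph} $G_\mathcal D$, whose vertices are the elements of $\mathcal D$ and whose edges are pairs $\{d,d'\}$ with $(T+d)\cap(T+d')\neq\emptyset$, is connected. Since $(T+d)\cap(T+d')\neq\emptyset$ is equivalent to $d-d'\in T-T$, and $\mathcal D$ is the integer rectangle $\{0,\ldots,|q|-1\}\times\{0,\ldots,|p|-1\}$, connectedness of $G_\mathcal D$ is guaranteed once one shows (i) $(1,0)^{\mathsf T}\in T-T$, which creates horizontal edges within each row, and (ii) some $(s,1)^{\mathsf T}\in T-T$ with $|s|\leq|q|-1$, which creates at least one edge between each pair of consecutive rows.

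Both (i) and (ii) are existence statements for a radix expansion $v=\sum_{i\geq 1}A^{-i}(u_i,v_i)^{\mathsf T}$ with $(u_i,v_i)\in\mathcal D-\mathcal D$. Using the lower-triangular form $(A^{-i})_{21}=\tfrac{a}{q-p}(p^{-i}-q^{-i})$, the system decouples into the first-coordinate equation $v^{(1)}=\sum u_ip^{-i}$ (a \emph{redundant} base-$p$ expansion, since $|u_i|\leq|q|-1>|p|-1$) and the second-coordinate equation
\[v^{(2)}=\tfrac{a}{q-p}\bigl(v^{(1)}-U(q)\bigr)+V(q),\]
where $U(q):=\sum u_iq^{-i}$ is determined by the $u$-sequence and $V(q):=\sum v_iq^{-i}$ may be chosen freely in $I:=[-(|p|-1)/(|q|-1),(|p|-1)/(|q|-1)]$. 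Hence $v\in T-T$ iff some admissible $u$-sequence realising $v^{(1)}$ produces $U(q)$ such that $v^{(2)}-\tfrac{a}{q-p}(v^{(1)}-U(q))\in I$.

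For (i) I would search among admissible $u$-patterns with $\sum u_ip^{-i}=1$ for one minimising $|1-U(q)|$; the appearance of the denominator $|q|-2$ in the claimed upper bound $|a|\leq q^2(|p|-1)/(|q|-2)$ suggests the extremal pattern has two ``extreme'' leading digits followed by a constant tail, exploiting the redundancy $|u_i|\leq|q|-1$ available from the rectangular shape of $\mathcal D$. For (ii) I would target $(0,1)^{\mathsf T}$ using an eventually-constant $u$-pattern with $\sum u_ip^{-i}=0$ (say $u_1=-c/(|p|-1)$ and $u_i=c$ for $i\geq 2$, a valid admissible pattern under our hypotheses); the constraint $|V(q)|\leq(|p|-1)/(|q|-1)$ combined with the shear term $\tfrac{a}{q-p}(0-U(q))$ having to supply most of the unit jump in the second coordinate should yield the lower bound $|a|\geq|q|(|q|-|p|)=q^2-|pq|$ for a judicious choice of $c$.

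The main obstacle is the combinatorial optimisation of the two $u$-patterns subject to the digit-magnitude constraints $|u_i|\leq|q|-1$ and $|v_i|\leq|p|-1$, carried out uniformly across the four sign cases of $(p,q)$ by working with $|p|$ and $|q|$ in the geometric-series estimates. Once (i) and (ii) are verified, any two digits in $\mathcal D$ are linked in $G_\mathcal D$ by a path that moves along a row and then jumps across adjacent rows, so $G_\mathcal D$ is connected and the Hata-type criterion delivers the connectedness of $T$.
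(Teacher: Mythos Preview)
Your overall architecture (Hata--Kirat--Lau criterion, decoupling via the lower-triangular structure, then proving two edge-existence statements) is exactly the framework the paper uses. However, you have the roles of the two directions transposed, and this is not harmless: it produces the wrong upper bound.

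Your step (i) asks for $(1,0)^{\mathsf T}\in T-T$, i.e.\ $S_{i,j}(T)\cap S_{i+1,j}(T)\neq\emptyset$. Carrying out your optimisation of $|1-U(q)|$ over admissible $u$-patterns with $\sum u_ip^{-i}=1$ and $|u_i|\le |q|-1$ gives (see Proposition~\ref{th(2.2)} and the first paragraph of Lemma~\ref{th(3.5)})
\[
\min_{\{a_k\}\in\mathcal A}\Bigl|\tfrac{a}{q-p}\bigl(1-\textstyle\sum a_kq^{-k}\bigr)\Bigr|=\frac{|a|(|q|-2)}{|q|(|q|-1)},
\]
so $(1,0)^{\mathsf T}\in T-T$ holds iff $|a|\le \dfrac{|q|(|p|-1)}{|q|-2}$, a factor of $|q|$ short of the claimed upper bound $\dfrac{q^2(|p|-1)}{|q|-2}$. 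In particular, for $|a|$ in the upper part of the MDD range the horizontal edges you rely on simply do not exist, and your graph argument collapses. (Concretely: with $|p|=4$, $|q|=6$, the MDD range is $12\le|a|\le 27$, while $(1,0)^{\mathsf T}\in T-T$ only for $|a|\le 9/2$.)

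The paper's proof (this theorem is recovered as the case $m=|q|$, $n=|p|$ of Theorem~\ref{th(1.1)}, proved as Theorem~\ref{th(3.6)}) runs your plan in the transposed orientation: it shows the \emph{vertical} edge $(0,1)^{\mathsf T}\in T-T$ (Lemma~\ref{th(3.4)}, yielding the lower bound $|a|\ge|q|(|q|-|p|)$) and then shows that \emph{some} $(1,t)^{\mathsf T}\in T-T$ with $|t|\le |p|-1$ (Lemma~\ref{th(3.2)}, yielding the correct upper bound $|a|\le q^2(|p|-1)/(|q|-2)$). The extra factor of $|q|$ appears precisely because the flexible second coordinate $t$ adds a free $j_0$-digit to the $V(q)$-interval: one is testing membership in $\{\sum_{k\ge0}j_kq^{-k}\}$ rather than $\{\sum_{k\ge1}j_kq^{-k}\}$. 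Your heuristic that the $|q|-2$ in the denominator signals ``two extreme leading digits'' is correct, but those digits sit in the $q$-expansion of the flexible column-linking step, not in the row-edge step.

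So the fix is simply to swap which direction carries the fixed unit step and which carries the free offset; after that, your outline matches the paper's Lemmas~\ref{th(3.2)}--\ref{th(3.4)} and the path-building in the proof of Theorem~\ref{th(3.6)}.
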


In this paper, we generalize the above two results to self-affine sets  and obtain the following necessary and sufficient condition for connectedness.

\begin{thm}\label{th(1.1)}
 Let $p,q,\in\mathbb{Z}$ with  $|p|, |q|\geq 2$,  $a\in \mathbb{R}$, and let  $|p|+1<m<2|p|-1$,
$n\geq (|q|+1)/2$. Let
  \begin{equation*}
 A=\begin{pmatrix}
 p& 0 \\
 -a&q
\end{pmatrix},\quad  \mathcal{D}=\left\{\dbinom{i}{j}: 0\leq i\leq m-1, 0\leq j\leq n-1 \right\}.
\end{equation*}

(i) If $|q|=2$, then $T(A,\mathcal{D})$ is connected for any $a$.

(ii) If $|q|\ge 3$, then $T(A,\mathcal{D})$ is connected if and only if
$$|q|(|q|-n)\leq |a| \leq \frac{q^2(n-1)}{|q|-2} \quad \text{or} \quad |q|-n\leq |a| \leq \frac{|q|(n-1)}{|q|-2}.$$
\end{thm}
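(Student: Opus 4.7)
The plan is to apply Hata's connectedness criterion to reduce the question to membership of a few candidate vectors in the difference set $T-T$, and then to decide each such membership by solving an optimization problem opened up by the lower-triangular structure of $A$. Concretely, Hata's theorem says $T=T(A,\mathcal{D})$ is connected iff the \emph{digit graph} $\mathcal{G}$ on $\mathcal{D}$---with $d,d'$ joined when $d-d' \in T-T$---is connected. Since $\mathcal{D}$ is the rectangular product $\{0,\dots,m-1\}\times\{0,\dots,n-1\}$, it suffices to exhibit or rule out a horizontal step like $(1,0)^T$ that traverses each row, together with a vertical or diagonal step $(k,1)^T$ (some $|k|\le m-1$) that jumps between consecutive rows. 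The two disjoint intervals appearing in (ii) should correspond to two different such candidate edges being available.

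From $T=A^{-1}(T+\mathcal{D})$ one has $v \in T-T$ iff $v=\sum_{i\ge 1} A^{-i}\delta_i$ with $\delta_i=(x_i,y_i)^T$ satisfying $|x_i|\le m-1$ and $|y_i|\le n-1$. The block lower-triangular form of $A$ decouples this into a scalar first-coordinate equation $v_1 = \sum_i p^{-i}x_i$ and a second-coordinate equation $v_2 = \sum_i(b_i x_i + q^{-i}y_i)$, where $b_i$ is an explicit shear coefficient depending on $a,p,q$ and converging geometrically. The hypothesis $m\ge |p|+2$ makes the first-coordinate radix map surjective onto an interval strictly containing $[-1,1]$ with substantial redundancy; likewise $n\ge (|q|+1)/2$ yields $2n-1\ge |q|$, so the $y$-reachable set is the interval $[-(n-1)/(|q|-1),\,(n-1)/(|q|-1)]$. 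Membership of a candidate vector in $T-T$ thereby reduces to whether the $x_i$'s can be chosen so that the residual $v_2-\sum_i b_i x_i$ falls inside this $y$-reachable interval.

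For the sufficient direction, in each of the two stated ranges of $|a|$ I would write down explicit expansions $(x_i,y_i)$ realizing one horizontal and one vertical or diagonal edge, thereby connecting $\mathcal{G}$; the upper and lower thresholds on $|a|$ in the statement match exactly the critical values at which the required $y$-target enters or leaves the reachable interval. For the necessary direction (when $|q|\ge 3$), if $|a|$ lies outside both intervals then for every candidate diagonal step $(k,1)^T$ with $|k|\le m-1$ the supremum and infimum of $\sum_i(b_i x_i + q^{-i} y_i)$ over all sign choices will lie strictly on the wrong side of $1$, so no such edge can be realized and the rows of $\mathcal{D}$ split into separate components. The case $|q|=2$ is then immediate: $2n-1 \ge 3 > |q|$ already places $(0,1)^T$ and $(1,0)^T$ in $T-T$ for every $a$, in line with the vanishing of the denominators $|q|-2$ in the upper bounds of (ii).

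The main obstacle is the necessity direction in (ii), and specifically its \emph{gap} case: when $n \le |q|-2$ (so $|q|\ge 5$) there is an interval of values of $|a|$ strictly between $\tfrac{|q|(n-1)}{|q|-2}$ and $|q|(|q|-n)$ where both upper bounds and both lower bounds fail, and I must show that \emph{no} candidate $(k,1)^T$, over the full range $|k|\le m-1$, succeeds there. This requires sharp two-sided estimates on the shear series $\sum_i b_i x_i$, carried out simultaneously over all sign patterns of $p$, $q$ and $a$---essentially a detailed case analysis plus tight extremal computations. Making these bounds match the stated thresholds exactly is the most delicate ingredient of the proof.
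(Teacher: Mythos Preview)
Your overall strategy---Hata's criterion plus a decoupled extremal analysis of the second coordinate via the lower-triangular structure of $A$---is exactly what the paper does. Two points need correction, the second of which is a genuine gap in your sufficiency argument.

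First, a simplification you are missing: because $m<2|p|-1$, the first-coordinate radix set $\{\sum_{k\ge 1} a_k p^{-k}: |a_k|\le m-1\}$ has length $(m-1)/(|p|-1)<2$, so $S_{i_1,j_1}(T)\cap S_{i_2,j_2}(T)\neq\emptyset$ forces $|i_1-i_2|\le 1$. You therefore only ever need to consider steps $(k,\ell)^T$ with $|k|\le 1$, not $|k|\le m-1$. This is the paper's Lemma~3.1 and it makes the necessity case analysis finite.

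Second, and more substantively: the two intervals in (ii) do \emph{not} correspond to two different single edges inside the single ``horizontal $(1,0)^T$ plus some $(k,1)^T$'' scheme you describe. They correspond to two genuinely different connectivity patterns on the digit graph. The smaller interval $|q|-n\le |a|\le \tfrac{|q|(n-1)}{|q|-2}$ is indeed your row-first scheme: $(1,0)^T\in T-T$ together with $(1,\pm 1)^T\in T-T$. But the larger interval $|q|(|q|-n)\le |a|\le \tfrac{q^2(n-1)}{|q|-2}$ is a \emph{column-first} scheme: here $(0,1)^T\in T-T$ (vertical edge within each column), and the columns are linked by \emph{some} edge $(1,j)^T$ whose offset $j$ depends on $a$ and need not be $0$ or $\pm 1$. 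In this regime $(1,0)^T$ is generally \emph{not} in $T-T$ (its membership requires $|a|\le\tfrac{|q|(n-1)}{|q|-2}$, which is the upper endpoint of the \emph{other} interval), so your plan to ``write down one horizontal and one vertical or diagonal edge'' cannot succeed there. The paper handles this by working with the aggregate $G_i(T)=\bigcup_j S_{i,j}(T)$ and showing $G_i(T)\cap G_{i+1}(T)\neq\emptyset$ without pinning down which $j$ realizes the intersection; combined with $(0,1)^T\in T-T$, this gives a connecting path that first runs up and down a column and then crosses to the next. Once you add this second scheme to your toolkit, your outline goes through, and the necessity argument in the gap region (your final paragraph) then only has to rule out the three cross-column steps $(1,-1)^T,(1,0)^T,(1,1)^T$ together with the in-column step $(0,1)^T$.
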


We notice that: when $n=|q|$, the above $T(A,\mathcal{D})$ is connected if and only if $|a|\le \frac{q^2(|q|-1)}{|q|-2}$. That is an  extension of Theorem \ref{DL-thm}; when $n=|p|$, the case (ii) of the theorem is a stronger version of Theorem \ref{MDD-thm}.

In the theory of self-affine tiles,  to characterize the tile digit sets $\mathcal{D}$ for a given expanding matrix is a very challenging problem (even in ${\mathbb R}^1$, see \cite{LLR13}). At the last section of remarks in \cite{DL11}, the authors doubt whether the above $T(A,\mathcal{D})$ is a tile or not, provided $mn=|pq|$. Here we can give some negative answers.

\begin{pro}\label{th(1.2)}
Let $|p|, |q|\geq 2, m, n>0$ be integers with $|pq|=mn$, and $a\in \mathbb{R}$.  Let
\begin{equation*}
 A=\begin{pmatrix}
 p& 0 \\
 a&q
\end{pmatrix}  \quad \text{and}\quad \mathcal{D}=\left\{\dbinom{i}{j}: 0\leq i\leq m-1, 0\leq j\leq n-1 \right\}.
\end{equation*} Then we have

(i)   $m<|p|$,  $T(A,\mathcal {D})$ is not a tile.

(ii)  $m=|p|$, $T(A,\mathcal {D})$ is  a tile.

(iii)  $m> |p|$,

   \ \ \ (a) If $a=0$, then  $T(A,\mathcal {D})$ is not  a tile;

  \ \ \  (b) If $a\in {\mathbb Z}\setminus \{0\}$  and $2n-1\geq |q|$,  then $T(A,\mathcal {D})$ is not a tile.
\end{pro}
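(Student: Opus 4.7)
The plan is to treat the four cases separately, using 1D projection arguments for (i) and (iii)(a), a Fubini/fiber argument for (ii), and the Lagarias-Wang criterion for (iii)(b). Throughout, let $T_1=\{\sum_{i\geq1}p^{-i}\delta_i:\delta_i\in\{0,\dots,m-1\}\}$ and $T_2=\{\sum_{i\geq1}q^{-i}\eta_i:\eta_i\in\{0,\dots,n-1\}\}$ denote the 1D self-similar sets associated with the two coordinates.

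For (i), the projection of $T(A,\mathcal{D})$ to the $x$-axis is precisely $T_1$; with $m<|p|$, $\dim_H T_1=\log m/\log|p|<1$, so $\mathcal{L}^1(T_1)=0$ and Fubini gives $\mathcal{L}^2(T)=0$, hence $T$ is not a tile. For (iii)(a), $a=0$ makes $T=T_1\times T_2$, and $m>|p|$ together with $mn=|pq|$ forces $n<|q|$, whence $\dim_H T_2=\log n/\log|q|<1$ and $\mathcal{L}^2(T)=0$. For (ii), $m=|p|$ and $n=|q|$ ensure that $T_1,T_2$ are intervals of positive length; for a.e.\ $x\in T_1$ the first-coordinate digit expansion is essentially unique, so the fiber $\{y:(x,y)\in T\}$ is a translate of $T_2$ and has positive 1D measure, giving $\mathcal{L}^2(T)>0$, so $T$ is a tile.

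The heart of the proof is case (iii)(b). I would invoke the Lagarias-Wang criterion: for integer expanding $A$ and $\mathcal{D}\subset\mathbb{Z}^2$ with $\#\mathcal{D}=|\det A|$, $T$ has positive Lebesgue measure iff the iterates $\mathcal{D}_k:=\{\sum_{i=0}^{k-1}A^id_i:d_i\in\mathcal{D}\}$ satisfy $\#\mathcal{D}_k=(mn)^k$ for every $k\geq1$, equivalently iff there is no nontrivial relation $\sum_{i=0}^{k-1}A^iw_i=0$ with $w_i\in\mathcal{D}-\mathcal{D}$. The hypothesis $2n-1\geq|q|$ guarantees that $\{-(n-1),\dots,n-1\}$ contains a complete residue system modulo $|q|$, and this plays two roles. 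First, it lets me choose $v_0\in\mathbb{Z}$ with $\mathrm{sgn}(p)\,a+qv_0\in\{-(n-1),\dots,n-1\}$; using $m>|p|$, I then pick $d_1,d_2\in\mathcal{D}$ with $d_1-d_2=A(\mathrm{sgn}(p),v_0)^\top$, producing a coincidence modulo $A\mathbb{Z}^2$. Second, it lets me expand $-v_0=\sum_{i\geq0}q^i\epsilon_i$ as a finite sum with $|\epsilon_i|\leq n-1$ via iterated reduction mod $|q|$. Since $A$ is lower triangular, $A^i(0,\epsilon_i)^\top=(0,q^i\epsilon_i)^\top$, so
\[
-(\mathrm{sgn}(p),v_0)^\top=(-\mathrm{sgn}(p),\epsilon_0)^\top+\sum_{i\geq1}A^i(0,\epsilon_i)^\top,
\]
with every summand in $\mathcal{D}-\mathcal{D}$. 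Combining with $d_1-d_2=A(\mathrm{sgn}(p),v_0)^\top$ produces the required vanishing relation in $\mathcal{D}_k$, so $\#\mathcal{D}_k<(mn)^k$ and $T$ has Lebesgue measure zero.

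The principal obstacle is (iii)(b): one must identify $2n-1\geq|q|$ as the precise threshold playing a double role---both forcing the initial mod-$A\mathbb{Z}^2$ coincidence and supplying a digit set for a terminating base-$|q|$ expansion---and exploit the lower-triangular form of $A$ (which keeps the vertical axis $A$-invariant) so that the expansion of $-v_0$ can be plugged in term by term. Without these ingredients, counterexamples exist in which $\mathcal{D}$ is not a complete residue system yet $T$ still tiles (e.g.\ $A=2$, $\mathcal{D}=\{0,2\}$ in $\mathbb{R}^1$), which is why the hypotheses of (iii)(b) cannot be relaxed naively.
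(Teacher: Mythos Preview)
Your proof is correct and, for parts (iii)(a) and (iii)(b), follows essentially the same route as the paper: measure-zero second factor for (iii)(a), and the Lagarias--Wang collision criterion for (iii)(b). Your construction in (iii)(b)---taking an $x$-difference $(|p|,-\mathrm{sgn}(p),0,\dots)$ to force the first coordinate to vanish, then using $2n-1\geq|q|$ to realise the residual integer $a$ (or $-a$) as a finite base-$q$ expansion with digits in $\{-(n-1),\dots,n-1\}$---is exactly the paper's argument repackaged: the paper writes down the specific $x$-sequences $(p,0,\dots,0)$ and $(0,1,0,\dots,0)$ and then solves for the $y$-difference directly, while you phrase it as ``coincidence mod $A\mathbb{Z}^2$'' followed by ``lift via a finite $q$-adic expansion''. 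The content is the same.

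Where you genuinely diverge is in (i) and (ii). For (i) the paper exhibits an explicit collision in $D_{A,2}$ (using $n>|q|$ to get $y_0=q$ versus $y_1=1$); your projection argument is cleaner and more conceptual, since $\pi_1(T)=T_1$ immediately has $\dim_H T_1\leq\log m/\log|p|<1$. For (ii) the paper simply cites \cite{DL11}, whereas you supply a self-contained Fubini argument via the a.e.\ uniqueness of the first-coordinate expansion; this is a genuine addition, and it has the advantage of working for arbitrary real $a$ without appealing to the integer complete-residue-system theory.
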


For the organization of the paper, we give some basic notation and tools in Section 2. We prove Theorem \ref{th(1.1)}  by four key lemmas in Section 3, and prove Proposition \ref{th(1.2)} in Section 4.

\section{\bf Preliminaries \label{sect.2}}

In this section, we provide some basic notation and results that play an important role in the proofs of the paper. Given two positive integers $m$ and $n$, we write $E_{m}=\{0,1,\dots, m-1\}$, $E_{n}=\{0,1,\dots, n-1\}$, and define the difference sets $\Delta E_m=E_m-E_m, \Delta E_n= E_n-E_n$. For the expanding matrix and the product digit set as in the last section
$$
A=\begin{pmatrix}
 p & 0 \\
 -a & q
\end{pmatrix}, \quad  \mathcal{D}=E_m\times E_n.$$
By simple calculations, we have
\begin{equation*}
A^{-k}=\begin{pmatrix}
 p^{-k} &0\\
r_ka&q^{-k}
\end{pmatrix},\quad k\geq 1,
\end{equation*}
where
\begin{equation}\label{eq2.3}
r_k:=\left\{
\begin{array}{ll}
(p^{-k}-q^{-k})/(q-p) & p\neq q \\
k/q^{k+1}& p=q.
\end{array}
\right.
\end{equation}

Following the notation of  \cite{DL11}, we denote two sets of sequences by
$${\mathcal I}_1=\{{\bf i}=i_1i_2\cdots: i_k\in{E}_{m}\} \quad\text{and}\quad {\mathcal I}_2=\{{\bf j}=j_1j_2\cdots: j_k\in{E}_{n}\}.$$
Then the self-affine set generated by $A$ and $\mathcal D$ can be written in the following explicit form:
\begin{equation}\label{eq2.4}
    T(A,\mathcal{D})=\Bigg\{
       \begin{pmatrix}
          p({\bf i})\\
            ar({\bf i})+ q({\bf j})
       \end{pmatrix}
              :{\bf i}\in {\mathcal I}_1, {\bf j}\in{\mathcal I}_2\Bigg\}.
\end{equation}
where
$$p({\bf i})=\sum_{k\ge 1}i_kp^{-k},\quad r({\bf i})=\sum_{k\ge 1}r_k i_k,\quad q({\bf j})=\sum_{k\ge 1}j_kq^{-k}.$$

Obviously, it is not hard to verify that $T(A,\mathcal{D})$ is connected if and only if $T(-A,\mathcal{D})$ is connected (see \cite{KL00} or \cite{MDD14}). So we may assume $p>0$ in the whole paper. In general, there is a fundamental criterion  on the connectedness of fractal sets (see, for example \cite{H85} or \cite{KL00}):

\begin{lemma}\label{th(2.1)}
 Let $\{\psi_j\}_{j=1}^N$ be a family of contractions on $\mathbb{R}^n$ and let $T$ be its attractor. Then $T$ is connected if and only if for any
$i\neq j \in  \{1,\dots,N\}$, there exists a sequence of indices $j_1,\dots,j_\ell$ in $\{1, \dots, N\}$, with $j_1=i$ and $j_\ell=j$, such that $\psi_{j_k}(T)\cap \psi_{j_{k+1}}(T) \neq \emptyset$ for all $1\leq k\leq \ell-1$.
\end{lemma}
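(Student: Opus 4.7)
The plan is to rephrase the chain condition as connectedness of the auxiliary graph $G$ on vertex set $\{1,\dots,N\}$ whose edges link $i$ and $j$ exactly when $\psi_i(T)\cap\psi_j(T)\neq\emptyset$. The claim then becomes: $T$ is connected if and only if $G$ is connected.

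For the ``only if'' direction I would argue by contrapositive. Suppose $G$ is disconnected and partition its vertex set into nonempty pieces $S_1,S_2$ with no edges between them. Set $T_i=\bigcup_{j\in S_i}\psi_j(T)$ for $i=1,2$. Each $T_i$ is a finite union of compacta, hence compact; they are nonempty and, by the no-edge hypothesis, disjoint. Since $T=T_1\cup T_2$ by the self-similarity relation $T=\bigcup_{j=1}^N\psi_j(T)$, the set $T$ is disconnected, contradicting our assumption.

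For the ``if'' direction, which is the substantive part, I would upgrade the level-one chain condition to every iteration level and then apply the standard fact that a compact metric space is connected if and only if it is $\varepsilon$-chain connected for every $\varepsilon>0$. For $k\geq 1$ and $\omega=\omega_1\cdots\omega_k\in\{1,\dots,N\}^k$, write $\psi_\omega=\psi_{\omega_1}\circ\cdots\circ\psi_{\omega_k}$, so that $T=\bigcup_{|\omega|=k}\psi_\omega(T)$. The claim to prove by induction on $k$ is: for every pair $\omega,\omega'$ of length $k$, there is a finite sequence $\omega=\tau^{(0)},\tau^{(1)},\dots,\tau^{(\ell)}=\omega'$ of length-$k$ words with $\psi_{\tau^{(s)}}(T)\cap\psi_{\tau^{(s+1)}}(T)\neq\emptyset$ for each $s$. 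The base case $k=1$ is exactly the hypothesis. For the induction step, two length-$k$ words sharing the same initial letter are linked by prepending that letter to a length-$(k-1)$ chain supplied by the inductive hypothesis applied inside $T$; to pass between words whose initial letters differ, I traverse the level-one chain and, at each transition $\psi_i(T)\cap\psi_j(T)\neq\emptyset$, promote the intersection point to an intersection of deeper cells.

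The technical heart of the argument, and the main obstacle, is precisely this ``promotion'' step. I would handle it as follows: any point in $\psi_i(T)\cap\psi_j(T)$ can be written as $\psi_i(x)=\psi_j(y)$ with $x,y\in T$, and by self-similarity $x\in\psi_{i'}(T)$ and $y\in\psi_{j'}(T)$ for some $i',j'\in\{1,\dots,N\}$; hence $\psi_{ii'}(T)\cap\psi_{jj'}(T)\neq\emptyset$. Iterating this observation produces nonempty intersections at any prescribed depth and lets the induction close. Finally, because each $\psi_j$ is a contraction, $\max_{|\omega|=k}\mathrm{diam}(\psi_\omega(T))\to 0$ as $k\to\infty$. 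Thus the level-$k$ chains translate, for $k$ large, into genuine $\varepsilon$-chains between any two prescribed points of $T$ for every $\varepsilon>0$; compactness of $T$ then forces $T$ to be connected.
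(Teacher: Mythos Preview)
Your argument is correct and is essentially the classical proof: the graph reformulation, the contrapositive for necessity, and the inductive level-$k$ chain combined with the $\varepsilon$-chain criterion for sufficiency all go back to Hata. The paper itself does not prove this lemma; it is simply quoted as a known criterion with references to \cite{H85} and \cite{KL00}, so there is no in-paper proof to compare your approach against.
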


However, in order to obtain the main theorems of the paper, we need to define the following useful tools:

\begin{align*}
&\mathcal{A}=  \{(a_1,a_2,\dots) : \ \sum_{k\ge 1}a_k p^{-k}=1, a_k \in \Delta E_m\},\\
&\mathcal{B}= \{(b_1,b_2,\dots) : \ \sum_{k \ge 1}  b_k p^{-k}=0, b_k \in \Delta E_m\},\\
&\mathcal{S}= \{\sum_{k \ge 1}a_k q^{-k}: \ (a_1,a_2,\dots)\in \mathcal{A}\}, \quad \mathcal{Q}= \{\sum_{k \ge 1}b_k q^{-k}: \ (b_1,b_2,\dots)\in \mathcal{B}\},\\
&\mathcal{S}'= \{\sum_{k \ge 1}ka_k p^{-k}: \ (a_1,a_2,\dots)\in \mathcal{A}\},\quad \mathcal{Q}'= \{\sum_{k \ge 1}kb_k p^{-k}: \ (b_1,b_2,\dots)\in \mathcal{B}\}.
\end{align*}

In the last part of this section, our main task is to calculate the maximums and minimums of ${\mathcal S}$, ${\mathcal S}'$ and ${\mathcal Q}$, ${\mathcal Q}'$ which will be used frequently in the next section.

As in \cite{MDD14}, if $p+1<m<2p-1$, we can divide ${\mathcal A}$  and $\mathcal{B}$ into  parts:
$$\mathcal{A}=\bigcup_{i=1}^3{\mathcal A}_i  \  \  \text{and}  \  \  \mathcal{B}=\bigcup_{i=1}^3\mathcal{B}_i$$
where
\begin{align*}
&\mathcal{A}_1=\{(p-1,c_1,c_2,\dots) : \sum_{k \ge 1} c_k p^{-k}=1,  c_k \in \Delta E_m\};\\
&\mathcal{A}_2=\{(p,c_1,c_2,\dots): \sum_{k \ge 1} c_k p^{-k}=0,  c_k \in \Delta E_m\};\\
&\mathcal{A}_3=\{(p+1,-c_1,-c_2,\dots) : \sum_{k \ge 1} c_k p^{-k}=1,  c_k \in \Delta E_m\};\\
&\mathcal{B}_1=\{(-1,c_1,c_2,\dots) : \sum_{k \ge 1} c_k p^{-k}=1, c_k \in \Delta E_m\};\\
&\mathcal{B}_2=\{(1,-c_1,-c_2,\dots) : \sum_{k \ge 1} c_k p^{-k}=1,  c_k \in \Delta E_m\};\\
&\mathcal{B}_3=\{(0,c_1,c_2,\dots): \sum_{ k \ge 1} c_k p^{-k}=0, c_k \in \Delta E_m\}.
\end{align*}

\begin{pro}\label{th(2.2)}
 Let $p, q, m \in \mathbb{Z}$, $p, |q|\geq 2$ and $p+1<m<2p-1 $, let $M_1=\max \{x:x\in \mathcal{S}\}$, $m_1=\min\{x:x\in \mathcal{S}\}$, then \\
(i) If $q>0$
\begin{equation}\label {eq2.5}
\left\{
\begin{array}{ll}
M_1=\frac {p-1}{q-1}, \quad    m_1=\frac {pq+q-2p}{q(q-1)}\qquad  p\ge q \\
\\
M_1=\frac {pq+q-2p}{q(q-1)}, \quad   m_1=\frac {p-1}{q-1} \qquad  p<q.
\end{array}
\right.
\end{equation}
(ii) If $q<0$
\begin{equation}\label {eq2.6}
M_1=\frac {pq+2p-q}{q(q+1)}, \quad   m_1=\frac {p+1}{q+1}.
\end{equation}
\end{pro}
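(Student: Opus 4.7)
The plan is to set up a closed system of recursions for the four extremal quantities $M_1,m_1,M_2,m_2$, where $M_2,m_2$ denote the max and min of $\mathcal{Q}$, and then solve the system by a case analysis that depends on the sign of $q$ and the comparison of $p$ with $q$.

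First I would verify that the decomposition $\mathcal{A}=\mathcal{A}_1\cup\mathcal{A}_2\cup\mathcal{A}_3$ is exhaustive. For any $(a_k)\in\mathcal{A}$, the tail satisfies $\bigl|\sum_{k\ge 2}a_k p^{-k}\bigr|\le (m-1)/(p(p-1))$, and the constraint $\sum_{k\ge 1}a_k p^{-k}=1$ then forces $|p-a_1|\le (m-1)/(p-1)<2$, where the strict inequality uses the hypothesis $m<2p-1$. Hence $a_1\in\{p-1,p,p+1\}$, and by the same argument $b_1\in\{-1,0,1\}$ for $(b_k)\in\mathcal{B}$. Since $\mathcal{A}$ and $\mathcal{B}$ are closed subsets of the compact product space $(\Delta E_m)^{\mathbb{N}}$, the sets $\mathcal{S}$ and $\mathcal{Q}$ are compact, so all four extrema are attained.

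Next, peeling off the first digit turns each membership in $\mathcal{A}_i$ or $\mathcal{B}_i$ into an expression of the form $\sum a_k q^{-k}=\alpha/q+\epsilon q^{-1}y$ with $\epsilon\in\{+1,-1\}$, $\alpha\in\{p-1,p,p+1,-1,0,1\}$, and $y$ ranging over $\mathcal{S}$ or $\mathcal{Q}$. Taking max and min over each piece, while separating the sign of $1/q$, converts the problem into a closed system in which each of $M_1,m_1,M_2,m_2$ is a max or min of three affine functions of $M_1,m_1,M_2,m_2$.

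Finally, I would solve this system by guessing which of the three branches realizes each extremum. For $q>0$, $p\ge q$, the natural guess is that $M_1$ is realized by the self-referential $\mathcal{A}_1$-branch (giving the fixed point $M_1=(p-1)/(q-1)$), while $m_1$ is realized by $\mathcal{A}_3$ using the value of $M_1$; for $q>0$, $p<q$, the roles of $M_1$ and $m_1$ swap, and for $q<0$ the sign-flip in the recursion places the self-referential fixed point on the $\mathcal{A}_3$-branch, giving $m_1=(p+1)/(q+1)$, after which $M_1$ is read off from the $\mathcal{A}_1$-branch. The main obstacle is the verification step: for each of these branch choices, one must check that the two rejected branches in the corresponding max or min do not exceed the proposed extremum. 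This reduces to algebraic inequalities such as $M_1-M_2\ge 1$ and $M_1+m_1\ge 2$, each of which turns out to be equivalent to the corresponding case hypothesis $p\ge q$, $p<q$, or $q<0$.
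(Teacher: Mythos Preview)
Your proposal is correct and follows essentially the same route as the paper: decompose $\mathcal{A}$ (and $\mathcal{B}$) by the first digit, obtain recursions of the form $x_1=(p-1+s)/q$, $x_2=(p+t)/q$, $x_3=(p+1-s)/q$ with $s\in\mathcal{S}$, $t\in\mathcal{Q}$, exhibit explicit sequences attaining the claimed values, and then verify extremality by solving the resulting closed system case by case on the sign of $q$ and on $p\gtrless q$.

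The only organisational difference is that the paper first eliminates the $\mathcal{A}_2$-branch from contention (by iterating the decomposition on $\mathcal{B}_3$ and observing that $\mathcal{S}_2^1,\mathcal{S}_2^2$ are dominated by $\mathcal{S}_1,\mathcal{S}_3$), which leaves a two-variable recursion in $M_1,m_1$ alone; it then closes this recursion by comparing $|1-m_1|$ with $M_1-1$ rather than by a guess-and-verify of all three branches. Your scheme of carrying $M_2,m_2$ through the system and checking inequalities such as $M_1-M_2\ge 1$ is equivalent, since $M_2=\max\{(M_1-1)/q,(1-m_1)/q\}$ once the self-referential $\mathcal{B}_3$-branch is seen not to bind. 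Either bookkeeping leads to the same fixed-point equations and the same verification inequalities.
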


\begin{proof}

Let $\mathcal{S}_i= \left \{\sum_{k \ge 1}a_k q^{-k}: (a_1,a_2,\dots) \in \mathcal{A}_i \right\}, i=1,2,3.$  Then $\mathcal{S}=\bigcup_{i=1}^3\mathcal{S}_i$.  The element of $\mathcal{S}_i$, say $x_i$, can be written explicitly as follows:
\begin{align}\label {eq2.7}
x_1=\frac {p}{q}+\frac {s-1}{q}, \ \ \ \ \ \  x_2=\frac {p}{q}+\frac {t}{q},  \ \ \ \ \ \  x_3=\frac {p}{q}+\frac {1-s}{q},
\end{align}  where $s\in \mathcal{S}, t\in \mathcal{Q}$.

Obviously, the four sequences $(p-1,p-1,p-1,\dots), (p+1,-p+1,-p+1,\dots), (p+1,-p-1,p+1,-p-1,\dots), (p-1,p+1,-p-1,p+1,\dots)$ belong to $\mathcal{A}$. Hence
\begin{equation*}
 \frac {p-1}{q-1},\quad \frac {pq+q-2p}{q(q-1)},\quad \frac {p+1}{q+1},\quad \frac {pq+2p-q}{q(q+1)}\in\mathcal{S}.
\end{equation*}

We first show that the  maximum and minimum of $\mathcal{S}$ can not be taken from $\mathcal{S}_2$. As $\mathcal{B}=\mathcal{B}_1\cup\mathcal{B}_2\cup\mathcal{B}_3$, we have $\mathcal{S}_2=\mathcal{S}^1_2\cup\mathcal{S}^2_2\cup\mathcal{S}^3_2$ where
$$\mathcal{S}^j_2=\frac{p}{q}+\frac{1}{q}\left \{\sum_{k \ge 1}c_k q^{-k}: (c_1,c_2,\dots) \in \mathcal{B}_j \right\}, \quad  j=1,2,3.$$
The element of $\mathcal{S}^j_2$, say $x^j_2$, can be written by
\begin{align*}
x^1_2=\frac {p}{q}+\frac {s-1}{q^2}, \ \ \ \ \ \  x^2_2=\frac {p}{q}+\frac {1-s}{q^2}, \ \ \ \ \ \  x^3_2=\frac {p}{q}+\frac {t}{q^2},
\end{align*}  where $s\in \mathcal{S}, t\in \mathcal{Q}$.
By \eqref{eq2.7}, it is easy to see  that  the maximum and minimum can not be taken from $\mathcal{S}^1_2$ or $\mathcal{S}^2_2$. Noting that $\mathcal{Q}$ is bounded, we can do the same decomposition $k$ times for $\mathcal{S}^3_2$ and get the element $\frac {p}{q}+\frac {t}{q^{k+1}}$, which is between $\frac {p-1}{q-1}$ and $\frac {p+1}{q+1}$.  Hence the maximum and minimum can not be taken from $\mathcal{S}_2$ as well.

Denote by $M_1^*:=\sup\{x:x\in \mathcal{S}\}, m_1^*:=\inf\{x:x\in \mathcal{S}\}$.  Trivially $M_1^*=m_1^*=M_1=m_1=1$ when $p=q$. Otherwise, for the case $q>0$.

(a) If $p>q$, then $\frac {p-1}{q-1}>\frac {pq+q-2p}{q(q-1)}\ge 1$, and $M_1^*>1$.  If $|1-m_1^*|\leq M_1^*-1$, then $\frac {p}{q}+\frac {1-M_1^*}{q}\leq x \leq \frac {p}{q}+\frac {M_1^*-1}{q}$ for any $x\in\mathcal{S}$ by \eqref{eq2.7}. Then
\begin{align*}
M_1^*\leq\frac {p}{q}+\frac {M_1^*-1}{q}, \qquad  m_1^*\geq \frac {p}{q}+\frac {1-M^*_1}{q}.
\end{align*}
Hence $M^*_1\leq\frac{p-1}{q-1}, \    m_1^*\geq\frac {pq+q-2p}{q(q-1)}$.  If $|1-m_1^*|> M_1^*-1$, then $m_1^*<1$ and $\frac {p}{q}+\frac {m_1^*-1}{q}\leq x \leq \frac {p}{q}+\frac {1-m_1^*}{q}$ for any $x\in\mathcal{S}$ by \eqref{eq2.7}.
Then
\begin{align*}
m_1^*\geq\frac {p}{q}+\frac {m_1^*-1}{q},\qquad M_1^*\leq\frac {p}{q}+\frac {1-m_1^*}{q}.
\end{align*}
Hence $m_1^*\geq\frac{p-1}{q-1}, \  M_1^*\leq\frac {pq+q-2p}{q(q-1)}$, which is impossible.  Therefore,  $M^*_1=M_1=\frac{p-1}{q-1}, \ m_1^*=m_1=\frac {pq+q-2p}{q(q-1)}$.

(b) If $p<q$, then $\frac {p-1}{q-1}<\frac {pq+q-2p}{q(q-1)} \le 1$ and  $m_1^*< 1$. If $1-m_1^*\leq |M_1^*-1|$,  then $M_1^*>1$ and $\frac {p}{q}+\frac {1-M_1^*}{q}\leq x \leq \frac {p}{q}+\frac {M_1^*-1}{q}$ for any $x\in\mathcal{S}$ by \eqref{eq2.7}. Hence
\begin{align*}
M_1^*\leq\frac {p}{q}+\frac {M_1^*-1}{q},\qquad m_1^*\geq \frac {p}{q}+\frac {1-M^*_1}{q}.
\end{align*}
It follows that $M^*_1\leq\frac{p-1}{q-1}, \ m_1^*\geq\frac {pq+q-2p}{q(q-1)}$, which is also impossible.

If $1-m_1^*>| M_1^*-1|$, then $\frac {p}{q}+\frac {m_1^*-1}{q}\leq x \leq \frac {p}{q}+\frac {1-m_1^*}{q}$ for any $x\in\mathcal{S}$ by \eqref{eq2.7}.
Then
\begin{align*}
m_1^*\geq\frac {p}{q}+\frac {m_1^*-1}{q},\qquad M_1^*\leq\frac {p}{q}+\frac {1-m_1^*}{q},
\end{align*}
and $m_1^*\geq\frac{p-1}{q-1},\ M_1^*\leq\frac {pq+q-2p}{q(q-1)}$. Therefore, $m^*_1=m_1=\frac{p-1}{q-1}, \  M_1^*=M_1=\frac {pq+q-2p}{q(q-1)}$.

In conclusion, we finish the proof of (i). The similar argument can be used to prove (ii).
\end{proof}

\begin{pro}\label{th(2.2.1)}
 Let $p\geq 2, m \in \mathbb{Z}$, and $p+1<m<2p-1 $, let $M'_1=\max \{x:x\in \mathcal{S}'\}, m'_1=\min\{x:x\in \mathcal{S}'\}$, then
$M'_1=\frac{p}{p-1}, m'_1=\frac{p-2}{p-1}$.
\end{pro}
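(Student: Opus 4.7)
The plan is to mirror the argument of Proposition \ref{th(2.2)}. First I exhibit two explicit sequences in $\mathcal{A}$ that realize the claimed values. The sequence $(p-1,p-1,p-1,\ldots)$ lies in $\mathcal{A}$ since $p-1\le m-1$, and using $\sum_{k\ge 1} k x^k = x/(1-x)^2$ yields
\[
\sum_{k\ge 1}k(p-1)p^{-k}=(p-1)\cdot\frac{p}{(p-1)^2}=\frac{p}{p-1},
\]
so $M_1'\ge p/(p-1)$. The sequence $(p+1,-(p-1),-(p-1),\ldots)$ also lies in $\mathcal{A}$ (since $m\ge p+2$), and a direct computation gives $(p+1)/p-(p-1)\bigl(p/(p-1)^2-1/p\bigr)=(p-2)/(p-1)$, so $m_1'\le (p-2)/(p-1)$.

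Next, I use the decompositions $\mathcal{A}=\mathcal{A}_1\cup\mathcal{A}_2\cup\mathcal{A}_3$ and $\mathcal{B}=\mathcal{B}_1\cup\mathcal{B}_2\cup\mathcal{B}_3$ from the preceding subsection. For any $(a_1,c_1,c_2,\ldots)\in \mathcal{A}_i$, the shift identity
\[
\sum_{k\ge 2}k\,c_{k-1}p^{-k}=\frac{1}{p}\sum_{l\ge 1}(l+1)c_lp^{-l}=\frac{1}{p}\Bigl(\sum_{l\ge 1}l\,c_lp^{-l}+\sum_{l\ge 1}c_lp^{-l}\Bigr),
\]
together with the sum conditions on $(c_l)$ defining each $\mathcal{A}_i$, expresses every $x_i\in\mathcal{S}'_i$ as
\[
x_1=1+\frac{s'}{p},\qquad x_2=1+\frac{t'}{p},\qquad x_3=1-\frac{s'}{p},
\]
with $s'\in\mathcal{S}'$ and $t'\in\mathcal{Q}'$. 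An entirely analogous calculation for $\mathcal{B}$ writes every element of $\mathcal{Q}'$ as $s'/p$, $-s'/p$, or $t'/p$ for some $s'\in\mathcal{S}'$, $t'\in\mathcal{Q}'$.

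Write $M_1'=\sup\mathcal{S}'$, $m_1'=\inf\mathcal{S}'$, $N_1'=\sup\mathcal{Q}'$, $n_1'=\inf\mathcal{Q}'$. The symmetry $(c_k)\mapsto(-c_k)$ of $\mathcal{B}$ gives $n_1'=-N_1'$. Iterating the self-reference from $\mathcal{B}_3$, whose $k$-fold iterate contributes only elements of the form $t^{(k)}/p^k$ with $t^{(k)}\in\mathcal{Q}'$ bounded, forces $N_1'=\max(M_1',-m_1')/p$ (exactly as the $\mathcal{S}^3_2$ iteration does in Proposition \ref{th(2.2)}). Substituting this back, the $\mathcal{S}'_2$ branch is contained in $[1-N_1'/p,\,1+N_1'/p]$ and is strictly dominated by the $\mathcal{S}'_1,\mathcal{S}'_3$ branches, so
\[
M_1'=1+\frac{1}{p}\max(M_1',-m_1'),\qquad m_1'=1+\frac{1}{p}\min(m_1',-M_1').
\]

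Finally, I solve this system by case analysis. In the second equation, the branch $\min(m_1',-M_1')=m_1'$ yields $m_1'=p/(p-1)>1$, contradicting $m_1'\le (p-2)/(p-1)$ from the first step; hence $m_1'=1-M_1'/p$. In the first equation, the branch $\max(M_1',-m_1')=-m_1'$ combined with $m_1'=1-M_1'/p$ yields $M_1'=p/(p+1)<1$, contradicting $M_1'\ge p/(p-1)$; hence $M_1'=p/(p-1)$, and therefore $m_1'=(p-2)/(p-1)$. The main obstacle, as in Proposition \ref{th(2.2)}, is the iteration step that eliminates the self-referential $\mathcal{S}'_2$ and $\mathcal{B}_3$ branches: one must exploit both the boundedness of $\mathcal{S}'$ and $\mathcal{Q}'$ and the geometric factor $1/p$ per iteration. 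Once that is carefully justified, the remaining algebra is mechanical.
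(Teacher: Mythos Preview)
Your proposal is correct and follows essentially the same strategy as the paper. The paper's own proof is extremely terse---it records the decomposition $x'_1=1+s'/p$, $x'_2=1+t'/p$, $x'_3=1-s'/p$, notes that $\tfrac{p}{p-1}$ and $\tfrac{p-2}{p-1}$ lie in $\mathcal{S}'$, and then simply says ``using the similar proof of Proposition~\ref{th(2.2)}''---whereas you have written out the iteration that kills the $\mathcal{S}'_2$ and $\mathcal{B}_3$ branches and the ensuing case analysis in full, which is all sound.
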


\begin{proof}
Let $\mathcal{S}'_i= \left \{\sum_{k \ge 1}ka_k p^{-k}: (a_1,a_2,\dots) \in \mathcal{A}_i \right\}, i=1,2,3.$  Then $\mathcal{S}'=\bigcup_{i=1}^3\mathcal{S}'_i$.  The element of $\mathcal{S}'_i$, say $x'_i$, can be written explicitly as follows:
\begin{align*}
x'_1=1+\frac{s'}{p}, \ \ \ \ \ \  x'_2=1+\frac {t'}{p},  \ \ \ \ \ \  x_3=1-\frac {s'}{p},
\end{align*}  where $s'\in \mathcal{S}', t'\in \mathcal{Q}'$.

Since $(p-1,p-1,p-1,\dots), (p+1,-p+1,-p+1,\dots)$ belong to $\mathcal{A}$, we have
\begin{equation*}
 \frac {p}{p-1},\quad \frac {p-2}{p-1}\in\mathcal{S}'.
\end{equation*}

Using the similar proof of  Proposition \ref{th(2.2)}, we conclude that $M'_1=\frac{p}{p-1}, m'_1=\frac{p-2}{p-1}$.
\end{proof}

\begin{pro}\label{th(2.3)}
Under the same assumption of Proposition \ref{th(2.2)}, and let $M_2 =\max \{x:x\in \mathcal{Q}\}, m_2 = \min\{x:x\in \mathcal{Q}\}$,
$M'_2 =\max \{x:x\in \mathcal{Q}'\}, m'_2 = \min\{x:x\in \mathcal{Q}'\}$.  Then
$M_2=| m_2 |=\frac {|p-q|}{|q|(|q|-1)}$, $M'_2=| m'_2 |=\frac {1}{p-1}$.
\end{pro}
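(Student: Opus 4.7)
The plan is to mirror the structure of Propositions \ref{th(2.2)} and \ref{th(2.2.1)}, replacing $\mathcal{A}$ by $\mathcal{B}$ and using the fact that $\mathcal{B}$ is symmetric under negation (because $\Delta E_m = -\Delta E_m$). This symmetry immediately yields $\mathcal{Q}=-\mathcal{Q}$ and $\mathcal{Q}'=-\mathcal{Q}'$, so $M_2=-m_2$ and $M_2'=-m_2'$; it then suffices to compute the suprema. In particular, the candidate extremal sequences $(-1,p-1,p-1,\dots)$ and $(1,-p+1,-p+1,\dots)$ already produce the value $\frac{|p-q|}{|q|(|q|-1)}$ in $\mathcal{Q}$, and $(-1,p-1,p-1,\dots)$, $(1,-p+1,-p+1,\dots)$ produce $\pm\frac{1}{p-1}$ in $\mathcal{Q}'$, so these are lower bounds for $M_2$ and $M_2'$; the task is to show they are tight.

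For $\mathcal{Q}$, I would write $\mathcal{Q}=\mathcal{Q}_1\cup\mathcal{Q}_2\cup\mathcal{Q}_3$ according to the decomposition $\mathcal{B}=\mathcal{B}_1\cup\mathcal{B}_2\cup\mathcal{B}_3$. Factoring out the first digit, a direct computation gives
\begin{align*}
y_1 &= \tfrac{-1+s}{q}, \quad s\in\mathcal{S}, \qquad
y_2 = \tfrac{1-s}{q}, \quad s\in\mathcal{S}, \qquad
y_3 = \tfrac{t}{q}, \quad t\in\mathcal{Q},
\end{align*}
for $y_i\in\mathcal{Q}_i$. Exactly as in the argument for $\mathcal{S}_2$ in Proposition \ref{th(2.2)}, iterating the decomposition of $\mathcal{Q}_3$ and using boundedness of $\mathcal{Q}$ shows that the extrema cannot be attained in $\mathcal{Q}_3$. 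Hence $M_2=\max\!\bigl(\tfrac{M_1-1}{q},\tfrac{1-m_1}{q}\bigr)$ (with obvious sign adjustments when $q<0$). Substituting the values of $M_1,m_1$ from Proposition \ref{th(2.2)} case by case ($p>q>0$, $p<q$, $q<0$), a short simplification using $M_1-1$ and $1-m_1$ yields
$\frac{|p-q|}{|q|(|q|-1)}$ in every case.

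For $\mathcal{Q}'$, a similar splitting gives, after collecting the coefficients $(k+1)$ that appear when the leading digit is stripped and using $\sum c_kp^{-k}\in\{0,1\}$ depending on the branch,
\begin{align*}
y_1' = \tfrac{s'}{p}, \quad s'\in\mathcal{S}', \qquad
y_2' = -\tfrac{s'}{p}, \quad s'\in\mathcal{S}', \qquad
y_3' = \tfrac{t'}{p}, \quad t'\in\mathcal{Q}'.
\end{align*}
The same iteration argument eliminates $\mathcal{Q}'_3$, and Proposition \ref{th(2.2.1)} gives $M_1'=\tfrac{p}{p-1}$, $m_1'=\tfrac{p-2}{p-1}\ge 0$. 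Hence $M_2'=\max\bigl(\tfrac{M_1'}{p},\tfrac{-m_1'}{p}\bigr)=\tfrac{1}{p-1}$, and by symmetry $m_2'=-\tfrac{1}{p-1}$.

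The main obstacle is purely bookkeeping: handling the four sign/magnitude cases for $(p,q)$ uniformly, and justifying that the supremum over $\mathcal{Q}_3$ (resp.\ $\mathcal{Q}'_3$) is genuinely strict, which follows from the fact that the factor $1/q$ (resp.\ $1/p$) strictly contracts the bounded set $\mathcal{Q}$ (resp.\ $\mathcal{Q}'$), so iterating drives the contribution of the $\mathcal{B}_3$-branch to zero.
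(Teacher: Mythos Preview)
Your proposal is correct and follows essentially the same route as the paper. The paper's proof is terser: it invokes the symmetry $M_2=-m_2$, $M'_2=-m'_2$, remarks that one may restrict to $\mathcal{B}_1$ (implicitly discarding $\mathcal{B}_3$ by the same iteration argument used for $\mathcal{S}_2$ in Proposition~\ref{th(2.2)} and using $\mathcal{Q}_2=-\mathcal{Q}_1$), and then reads off $\frac{M_1-1}{q}$, $\frac{1-m_1}{q}$, $\frac{M_1'}{p}$ from Propositions~\ref{th(2.2)} and~\ref{th(2.2.1)} case by case, exactly as you do. Your write-up is simply more explicit about the three-branch decomposition and the contraction argument eliminating $\mathcal{Q}_3$ and $\mathcal{Q}'_3$.
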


\begin{proof}
By the symmetry of ${\mathcal B}$, we only need to calculate the maximum as $M_2=-m_2$, $M'_2=-m'_2$. Moreover,  note that $\mathcal{B}=\mathcal{B}_1\cup\mathcal{B}_2\cup\mathcal{B}_3$, it suffices to consider $\mathcal{B}_1$.

By making use of Proposition \ref{th(2.2)}, if $q>0$, when $p\geq q$, we have $M_1\geq m_1\ge 1$, so
\begin{align*}
\max\{|\sum_{k \ge 1} a_k q^{-k}|: \  (a_1,a_2,\dots) \in \mathcal{B}_1\}=\frac{M_1-1}{q}=\frac{p-q}{q(q-1)}.
\end{align*}
When  $p< q$, we have $1\ge M_1>m_1>0$, so
\begin{align*}
\max\{|\sum_{k \ge 1} a_k q^{-k}|: \  (a_1,a_2,\dots) \in \mathcal{B}_1\}=\frac{1-m_1}{q}=\frac{q-p}{q(q-1)}.
\end{align*}

If $q<0$, then $m_1<0, M_1< 1$, hence
\begin{align*}
&\max\{|\sum_{k \ge 1} a_k q^{-k}|: \  (a_1,a_2,\dots) \in \mathcal{B}_1\}=\frac{1-m_1}{q}=\frac{p-q}{q(q+1)},
\end{align*} and  $M_2=| m_2 |=\frac {|p-q|}{|q|(|q|-1)}$.

It follows from Proposition \ref{th(2.2.1)} that
\begin{align*}
\max\{|\sum_{k \ge 1} ka_k p^{-k}|: \  (a_1,a_2,\dots) \in \mathcal{B}_1\}=\frac{M'_1}{p}=\frac{1}{p-1}.
\end{align*}
Therefore, $M'_2=| m'_2 |=\frac {1}{p-1}$.
\end{proof}

\section{\bf Connectedness}

For the affine pair $(A,\mathcal {D})$ as in the last section, we let
\begin{eqnarray}\label{eq(3.1)}
  S_{i,j}\dbinom{x}{y}=A^{-1}\Bigg[\dbinom{x}{y}+\dbinom{i}{j}\Bigg],
\end{eqnarray}
where $i\in E_m, j\in E_n$. Then $\{S_{i,j}\}$ is the iterated function system that generates the self-affine set $T(A,\mathcal{D})$. According to (\ref{eq2.4}) and (\ref{eq(3.1)}),  $S_{i,j}(T)$ can be written as the form:
\begin{eqnarray}\label{eq(3.2)}
S_{i,j}(T)= \left\{\dbinom{\frac{i+p({\bf i})}{p}} {ar(i{\bf i})+\frac{j+q({\bf j})}{q}}: {\bf i}\in {\mathcal I}_1, {\bf j}\in {\mathcal I}_2\right\},
\end{eqnarray}
where $r(i{\bf i})={i}/{pq}+\sum_{k=1}^{\infty}r_{k+1} i_k$ for ${\bf i}=i_1i_2\cdots.$
\begin{lemma}\label{th(3.1)}
If $p+1<m<2p-1$, then $S_{i_1,j_1 }(T)\cap S_{i_2,j_2 }(T)\neq \emptyset  $ implies $\left| i_1-i_2 \right|\le 1$.
\end{lemma}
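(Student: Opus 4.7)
The plan is to project the intersection condition onto the first coordinate, which is the cleanest piece of structure since the first row of $A$ is $(p,0)$ and hence the first coordinate of a point in $S_{i,j}(T)$ depends only on $i$ (not on $j$, nor on $a$). Thus the hypothesis $S_{i_1,j_1}(T)\cap S_{i_2,j_2}(T)\neq\emptyset$ forces an equality of first coordinates that will translate $|i_1-i_2|$ into a series with bounded digits, and the condition $m<2p-1$ will make that series too small to hit $2$.

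First, using the explicit parametrisation \eqref{eq(3.2)}, I would assume there exist sequences ${\bf i},{\bf i}'\in\mathcal{I}_1$ such that
\begin{equation*}
\frac{i_1+p({\bf i})}{p}=\frac{i_2+p({\bf i}')}{p},
\end{equation*}
so that $i_1-i_2=p({\bf i}')-p({\bf i})=\sum_{k\ge 1}c_k p^{-k}$ with $c_k=i_k'-i_k\in\Delta E_m$. This is the only place I use the hypothesis of nonempty intersection; the second coordinate is irrelevant to the conclusion.

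Next I would estimate this series in absolute value: since each $|c_k|\le m-1$,
\begin{equation*}
|i_1-i_2|\;=\;\Bigl|\sum_{k\ge 1}c_k p^{-k}\Bigr|\;\le\;(m-1)\sum_{k\ge 1}p^{-k}\;=\;\frac{m-1}{p-1}.
\end{equation*}
Invoking $m<2p-1$ gives $m-1<2(p-1)$, hence $(m-1)/(p-1)<2$. Since $i_1,i_2$ are integers, $|i_1-i_2|$ is a nonnegative integer strictly less than $2$, so $|i_1-i_2|\le 1$.

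I don't foresee a genuine obstacle here: the lemma is essentially a one-line consequence of the lower-triangular structure of $A$ together with the upper bound $m<2p-1$. The only thing to be careful about is to remember that the lower bound $p+1<m$ plays no role in this particular argument (it is what makes the sets $\mathcal{A}_i,\mathcal{B}_i$ of Section 2 meaningful and will be used in the later lemmas), so I would state the proof cleanly using only $m<2p-1$ and note that $p$ has been taken positive at the start of Section 2.
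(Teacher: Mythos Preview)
Your proof is correct and follows essentially the same route as the paper's: project onto the first coordinate to obtain $i_1-i_2=\sum_{k\ge1}c_kp^{-k}$ with $c_k\in\Delta E_m$, bound this by $(m-1)/(p-1)<2$, and conclude $|i_1-i_2|\le1$. Your remark that only the hypothesis $m<2p-1$ is actually used here (the paper invokes both inequalities and also asserts the difference set is a full interval, which requires $m\ge p+1$ but is not needed for the conclusion) is a valid sharpening.
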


\begin{proof}
By the above, we assume $$S_{i_1,j_1}(T)= \left\{\dbinom{\frac{i_1+p({\bf i})}{p}} {ar(i_1{\bf i})+\frac{j_1+q({\bf j})}{q}}: {\bf i}\in {\mathcal I}_1, {\bf j}\in {\mathcal I}_2\right\}$$ and $$S_{i_2,j_2}(T)= \left\{\dbinom{\frac{i_2+p({\bf i}')}{p}} {ar(i_2{\bf i}')+\frac{j_2+q({\bf j}')}{q}}: {\bf i}'\in {\mathcal I}_1, {\bf j}'\in{\mathcal I}_2\right\}.$$

The nonempty intersection implies that
\begin{align*}
i_1-i_2\in\{p({\bf i})-p({\bf i}'): {\bf i},{\bf i}' \in {\mathcal I}_1\}=\{\sum_{k\ge 1} a_kp^{-k}: a_k\in \Delta E_m\}
\end{align*} where the last term is an interval $[-\frac{m-1}{p-1}, \frac{m-1}{p-1}]$. From the condition $p+1<m<2p-1$, we conclude that  $|i_1-i_2|=0$ or $1$.
\end{proof}

\begin{lemma}\label{th(3.2)}
Let $G_i(T)=\cup_{j=0}^{n-1}S_{i,j}(T)$ for $i=0,1,\dots, m-1$. If $p+1<m<2p-1$, and $n\geq (|q|+1)/2$. Then  $G_i(T)\cap G_{i+1}(T) \neq \emptyset$ if and only if  $|a|(|q|-2) \leq q^2(n-1)$.
 \end{lemma}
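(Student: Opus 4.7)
The plan is to convert the set-theoretic intersection condition into an analytic existence statement and then apply the extremal bounds of Propositions \ref{th(2.2)} and \ref{th(2.2.1)}.

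\textbf{Step 1 (set-up).} Using the parametrization \eqref{eq(3.2)}, $G_i(T)\cap G_{i+1}(T)\neq\emptyset$ is equivalent to the existence of ${\bf i},{\bf i}'\in\mathcal{I}_1$, ${\bf j},{\bf j}'\in\mathcal{I}_2$, and $j,j'\in E_n$ with
\[
p({\bf i})-p({\bf i}')=1 \quad\text{and}\quad a\bigl[r(i{\bf i})-r((i+1){\bf i}')\bigr]=\frac{(j'-j)+q({\bf j}')-q({\bf j})}{q}.
\]
The first equation forces $(a_k):=(i_k-i_k')\in\mathcal{A}$. A direct computation using \eqref{eq2.3} and $\sum a_k p^{-k}=1$ gives
\[
r(i{\bf i})-r((i+1){\bf i}')=\frac{1-s}{q(q-p)}\quad (p\ne q), \qquad r(i{\bf i})-r((i+1){\bf i}')=\frac{s'}{q^2}\quad (p=q),
\]
where $s:=\sum_{k\ge1}a_kq^{-k}\in\mathcal{S}$ and $s':=\sum_{k\ge1}ka_kp^{-k}\in\mathcal{S}'$.

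\textbf{Step 2 (identifying the RHS).} The right-hand side ranges over $E:=\Delta E_n+\bigl\{\sum_{k\ge 1}c_k q^{-k}:c_k\in\Delta E_n\bigr\}$. Since $|\Delta E_n|=2n-1$, the hypothesis $n\ge(|q|+1)/2$ is precisely $2n-1\ge|q|$, which guarantees that the self-affine set $\{\sum c_k q^{-k}\}$ fills the interval $[-\tfrac{n-1}{|q|-1},\tfrac{n-1}{|q|-1}]$ and that its integer translates by $\Delta E_n$ overlap without gaps; hence $E=\bigl[-\tfrac{|q|(n-1)}{|q|-1},\tfrac{|q|(n-1)}{|q|-1}\bigr]$.

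\textbf{Step 3 (deriving the iff).} Combining Steps 1--2, the intersection is nonempty iff there exists $s\in\mathcal{S}$ (or $s'\in\mathcal{S}'$ when $p=q$) with $|a(1-s)|/|q-p|\le|q|(n-1)/(|q|-1)$, i.e.,
\[
\min_{s\in\mathcal{S}}|1-s|\;\le\;\frac{|q-p|\cdot|q|(n-1)}{|a|(|q|-1)}.
\]
Proposition \ref{th(2.2)} gives the extremes of $\mathcal{S}$, and a case analysis on $p>q$ ($q>0$), $p<q$ ($q>0$), and $q<0$ shows that in every case $\min|1-s|=\frac{(|q|-2)|q-p|}{|q|(|q|-1)}$, attained at $s=m_1$ or $M_1$ and realized by an explicit sequence in $\mathcal{A}$ (such as $(p-1,p-1,\ldots)$ or $(p+1,-p+1,-p+1,\ldots)$). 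Substituting and cancelling $|q-p|$ yields the equivalent condition $|a|(|q|-2)\le q^2(n-1)$. The subcase $p=q$ is handled identically with Proposition \ref{th(2.2.1)}, using $\min_{s'\in\mathcal{S}'}|s'|=(p-2)/(p-1)$, and produces the same final inequality. For sufficiency, the extremal sequence supplies explicit ${\bf i},{\bf i}'$, and Step 2's interval property lets us realize the target value $\tfrac{a(1-s)}{q-p}$ as $(j'-j)+q({\bf j}')-q({\bf j})$.

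\textbf{Main obstacle.} The arithmetic in Step 3 requires tracking signs through several subcases (for example, checking that $1-m_1=(q-2)(q-p)/[q(q-1)]$ and its analogues collapse uniformly to the same bound across all sign combinations of $q$ and $p-q$). A subtler point is Step 2: that $E$ is a single interval rather than a disconnected union relies on the \emph{sharp} bound $2n-1\ge|q|$ and requires care when $q<0$, where the contractions $x\mapsto(x+c)/q$ alternate orientation.
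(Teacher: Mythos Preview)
Your argument is correct and follows essentially the same route as the paper: reduce the intersection to the existence of a sequence in $\mathcal{A}$ for which the $y$-coordinate identity lands in the symmetric interval coming from $\Delta E_n$, then use the extremes of $\mathcal{S}$ (resp.\ $\mathcal{S}'$) from Propositions~\ref{th(2.2)} and~\ref{th(2.2.1)} to turn this into the inequality $|a|(|q|-2)\le q^2(n-1)$. Your unified closed form $\min_{s\in\mathcal{S}}|1-s|=\tfrac{(|q|-2)|q-p|}{|q|(|q|-1)}$ is a nice compression of the paper's case-by-case check, and it rests on the same key observation (implicit in your ``attained at $s=m_1$ or $M_1$'') that Proposition~\ref{th(2.2)} places all of $\mathcal{S}$ on one side of $1$. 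One small bookkeeping slip: the right-hand side displayed in Step~1 still carries the factor $1/q$, whereas the set $E$ you describe in Step~2 is the version after multiplying through by $q$; your Step~3 uses the latter, so the final computation is unaffected, but it is worth aligning the two steps.
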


\begin{proof} $G_i(T)\cap G_{i+1}(T)\neq \emptyset  $ holds if  and  only  if there exist some $j,k$ such that $S_{i,j}(T)\cap S_{i+1,k}(T)\neq \emptyset$, that is, by (\ref{eq(3.2)}), there exists a sequence $\{a_k\} \in {\mathcal A}$ such that
\begin{align}\label{eq0}
{\frac{a}{pq}-a \sum_{k\ge 1} r_{k+1}a_k} \in \left\{\sum_{k\ge 0}j_kq^{-k}: j_k\in \Delta E_n \right\},
\end{align}
in which the last set is equal to $\left [-\frac{{n-1}}{\left | q \right |-1},\frac{{n-1}}{\left | q \right |-1}\right]$, as $n\ge (|q|+1)/2$.

It follows from \eqref{eq2.3} that: if $p=q$, then the left term of \eqref{eq0} becomes   $$\frac{a}{q^2}\sum_{k \ge 1} k a_kq^{-k}.$$
By Proposition \ref{th(2.2.1)}, \eqref{eq0} holds if and only if
$$
\min_{\{a_k\} \in {\mathcal A}}|\frac{a}{q^2}\sum_{k \ge 1} k a_kq^{-k}|=\left|\frac{am'_1}{q^2}\right|=\left|\frac{a(q-2)}{q^2(q-1)}\right|\le \frac{{n-1}}{\left | q \right |-1}.
$$

If $p\neq q$, then the left term of \eqref{eq0} becomes $$\frac{a}{q(q-p)}(\sum_{k \ge 1} a_kq^{-k}-1).$$  By Proposition \ref{th(2.2)},  $M_1>m_1\geq 1$ or $m_1<M_1\leq 1$ for any cases. Hence \eqref{eq0} is equivalent to
$$
\left| \frac{a}{q(q-p)}(M_1-1) \right|\leq \frac{{n-1}}{|q|-1} \quad\text{or}\quad  \left| \frac{a}{q(q-p)}(m_1-1)\right|\leq \frac{{n-1}}{|q|-1}.
$$
By substituting the values of $M_1, m_1$ in (\ref{eq2.5}), (\ref{eq2.6}), we conclude that \eqref{eq0} holds if and only if  $|a|(|q|-2)\leq q^2(n-1).$
\end{proof}

\begin{lemma}\label{th(3.4)}
Under the above assumption of Lemma \ref{th(3.2)}.  If $|q|>2$, then both $G_i(T)\cap G_{i+1}(T) \neq \emptyset$ and $S_{i,j}(T)\cap S_{i,j+1}(T)\neq \emptyset$ hold for any $i,j$ if and only if  $|q|(|q|-n)\leq|a|\leq \frac{q^2(n-1)}{|q|-2}$; if $|q|=2$, then $G_i(T)\cap G_{i+1}(T) \neq \emptyset$ and $S_{i,j}(T)\cap S_{i,j+1}(T)\neq \emptyset $ always hold.
\end{lemma}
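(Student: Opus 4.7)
The plan is to combine Lemma \ref{th(3.2)} (which already characterises $G_i(T)\cap G_{i+1}(T)\ne\emptyset$ by $|a|(|q|-2)\le q^2(n-1)$) with a parallel analysis of the vertical adjacency $S_{i,j}(T)\cap S_{i,j+1}(T)\ne\emptyset$, and then intersect the two resulting constraints on $|a|$.

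For the vertical adjacency, I would read off from \eqref{eq(3.2)}: equality of the $x$-coordinates forces $p({\bf i})=p({\bf i}')$, so $\{b_k\}:=\{i_k-i'_k\}\in\mathcal{B}$; the $i/(pq)$ contributions in $r(i{\bf i})$ cancel, and equality of the $y$-coordinates, combined with the hypothesis $n\ge(|q|+1)/2$ (so the $y$-difference set fills $[-\tfrac{n-1}{|q|-1},\tfrac{n-1}{|q|-1}]$), yields
\[
a\sum_{k\ge 1}r_{k+1}b_k-\tfrac{1}{q}\in\left[-\tfrac{n-1}{|q|(|q|-1)},\tfrac{n-1}{|q|(|q|-1)}\right]\quad\text{for some }\{b_k\}\in\mathcal{B}.
\]
Using \eqref{eq2.3} and the defining identity $\sum b_kp^{-k}=0$, this sum telescopes: when $p\ne q$, $\sum r_{k+1}b_k=-t/(q(q-p))$ with $t=\sum b_kq^{-k}\in\mathcal{Q}$, turning the condition into $|at+(q-p)|\le|q-p|(n-1)/(|q|-1)$; when $p=q$, $t$ vanishes automatically and the sum reduces to $u/q^2$ with $u=\sum kb_kp^{-k}\in\mathcal{Q}'$, giving $|au-q|\le|q|(n-1)/(|q|-1)$.

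Next I would apply Proposition \ref{th(2.3)}, which supplies $\max|t|=|p-q|/(|q|(|q|-1))$ and $\max|u|=1/(p-1)$, each attained by an explicit sequence in $\mathcal{B}$, while $0\in\mathcal{Q}\cap\mathcal{Q}'$ is trivial. If $n\ge|q|$, the target interval contains $0$, so $t=0$ (resp.\ $u=0$) witnesses the intersection. If $n<|q|$, the target lies wholly on one side of $0$ with nearer endpoint at distance $|p-q|(|q|-n)/(|q|-1)$ (resp.\ $|q|(|q|-n)/(|q|-1)$); comparing this to $|a|\cdot\max|t|$ (resp.\ $|a|\cdot\max|u|$) yields exactly $|a|\ge|q|(|q|-n)$. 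Intersecting with Lemma \ref{th(3.2)} produces the claimed range $|q|(|q|-n)\le|a|\le q^2(n-1)/(|q|-2)$ for $|q|>2$. When $|q|=2$, Lemma \ref{th(3.2)} imposes no upper bound, while the standing hypothesis $n\ge(|q|+1)/2=3/2$ forces $n\ge 2$, so $|q|(|q|-n)\le 0\le|a|$ holds automatically.

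The one technical point I expect to have to verify is that, in the $n<|q|$ subcase, the extremal choices $t=\pm M_2$ (resp.\ $u=\pm M'_2$) do not overshoot the far endpoint of the target interval anywhere in the allowed range of $|a|$. This reduces to the elementary inequality $(|q|-1)(|q|-2)\ge 2(n-1)$, which follows from $|q|\ge 3$ and $n\le|q|-1$, and guarantees that even at the upper extreme $|a|=q^2(n-1)/(|q|-2)$ the product $|a|\cdot\max|t|$ stays inside the target interval. Thus the extremes of $\mathcal{Q}$ and $\mathcal{Q}'$ together with $0$ suffice, and no assumption that $\mathcal{Q}$ or $\mathcal{Q}'$ is a full interval is required.
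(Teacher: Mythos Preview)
Your proposal is correct and follows essentially the same route as the paper: both reduce the vertical adjacency to the existence of $\{b_k\}\in\mathcal{B}$ with $qa\sum_{k\ge1}r_{k+1}b_k$ landing in the shifted interval $[-\tfrac{|q|+n-2}{|q|-1},-\tfrac{|q|-n}{|q|-1}]$, invoke Proposition~\ref{th(2.3)} to identify the maximum modulus of this expression as $|a|/(|q|(|q|-1))$, and then check the same elementary inequality (your $(|q|-1)(|q|-2)\ge 2(n-1)$ is exactly the paper's verification that $\tfrac{|q|(n-1)}{|q|-2}\le |q|+n-2$) to ensure the extremal sequence does not overshoot the far endpoint. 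Your separation into $t\in\mathcal{Q}$ versus $u\in\mathcal{Q}'$ is just an explicit unwinding of what the paper writes compactly as a single displayed max, and your observation that only the extreme values and $0$ are needed (not a full interval) is implicit in the paper's use of ``max'' throughout.
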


\begin{proof}
If $|q|>2$, then by Lemma \ref{th(3.2)}, $G_i(T)\cap G_{i+1}(T) \neq \emptyset$ implies that $|a|\leq \frac{q^2(n-1)}{|q|-2}$.

By (\ref{eq(3.2)}), $S_{i,j}(T)\cap S_{i,j+1}(T)\neq \emptyset$ holds if and only if there exists a sequence $\{a_k\} \in {\mathcal B}$  such that
\begin{align*}
1+qa\sum_{k \ge 1} {r_{k+1}a_k} \in \left\{ \sum_{k \ge 1} j_kq^{-k}: j_k\in \Delta E_n \right\}= \left[ -\frac{{n-1}}{\left | q \right |-1},\frac{{n-1}}{\left | q \right |-1}\right].
\end{align*}
Equivalently,
\begin{align}\label{eq(3.8)}
  {qa\sum_{k \ge 1} {r_{k+1}a_k}}  \in  \left [ -\frac{{\left | q \right |}+n-2}{\left | q \right |-1}, -\frac{{\left | q \right |}-n}{\left | q \right |-1}\right].
\end{align}
If $n\geq |q|$, we can choose $a_k=0$ for all $k$ such that \eqref{eq(3.8)} holds for any $a$.  Hence for $|q|=2$, as $n\geq (|q|+1)/2=3/2$, then
$n\geq 2=|q|$ and $S_{i,j}(T)\cap S_{i,j+1}(T)\neq \emptyset $ always holds.

By Proposition \ref{th(2.3)}, we have
\begin{equation*}
\max_{\{a_k\} \in {\mathcal B}} |{qa\sum_{k \ge 1} {r_{k+1}a_k}}|=\left\{
\begin{array}{ll}
{\max}_{\{a_k\} \in {\mathcal B}}|\frac{a}{q}\sum_{k \ge 1} k a_kq^{-k}|=|\frac{a}{q(|q|-1)}| \quad & p=q\\
\\
{\max}_{\{a_k\} \in {\mathcal B}}|\frac {a}{q-p}\sum_{k \ge 1} a_kq^{-k}|=|\frac{a}{q(|q|-1)}| \quad & p\neq q.
\end{array}
\right.
\end{equation*}

If $n<|q|$, then \eqref{eq(3.8)} implies that
\begin{align*}
\frac{|q|-n}{|q|-1} \le \max_{\{a_k\} \in {\mathcal B}}|{qa\sum_{k \ge 1} {r_{k+1}a_k}}|=\left|\frac{a}{q(|q|-1)} \right|.
\end{align*}
Therefore, $|q|(|q|-n)\leq|a|\leq \frac{q^2(n-1)}{|q|-2}$.

For the sufficiency, if $|q|(|q|-n)\leq|a|\leq \frac{q^2(n-1)}{|q|-2}$ holds, then  $G_i(T)\cap G_{i+1}(T) \neq \emptyset$ by Lemma \ref{th(3.2)}, and
\begin{align*}
\frac{{\left | q \right |}-n}{\left | q \right |-1} \le \left|\frac{a}{q(|q|-1)} \right|= \max_{\{a_k\} \in {\mathcal B}}|{qa\sum_{k \ge 1} {r_{k+1}a_k}}|\leq\frac{|q|(n-1)}{(|q|-1)(|q|-2)}.
\end{align*}
As $|q|\geq 3$, if $n<|q|$ then
 \begin{align*}
 \frac{|q|(n-1)}{|q|-2}-(|q|+n-2)=\frac{-q^2+3|q|-4+2n}{|q|-2}\leq\frac{-q^2+5|q|-6}{|q|-2}=-(|q|-3)\leq 0.
\end{align*}
Hence $$\frac{|q|-n}{|q|-1}\leq  \max_{\{a_k\} \in {\mathcal B}} |{qa\sum_{k \ge 1} {r_{k+1}a_k}}|\leq\frac{|q|(n-1)}{(|q|-1)(|q|-2)}\leq\frac{{\left | q \right |}+n-2}{\left | q \right |-1},$$ which means that we can find the sequence $\{a_k\}_k\in {\mathcal B}$ satisfying \eqref{eq(3.8)}. If $n\ge |q|$, we choose $a_k=0$ for all $k$ such that \eqref{eq(3.8)}  holds.  Consequently, $S_{i,j}(T)\cap S_{i,j+1}(T)\neq \emptyset$ holds.
\end{proof}

\begin{lemma}\label{th(3.5)}
Under the above assumption of Lemma \ref{th(3.2)} and $|q|\ge 3$. Then both $S_{i,j}(T)\cap S_{i+1,j}(T)\neq \emptyset$ and $S_{i,j}(T)\cap S_{i+1,j+1}(T)\neq \emptyset$ hold, or  both $S_{i,j}(T)\cap S_{i+1,j}(T)\neq \emptyset$ and  $S_{i,j+1}(T)\cap S_{i+1,j}(T)\neq \emptyset$ hold for all  $i, j$ if and only if $|q|-n\leq|a|\leq \frac{|q|(n-1)}{|q|-2}$.
 \end{lemma}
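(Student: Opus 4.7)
The plan is to follow the template of Lemma \ref{th(3.4)}. By \eqref{eq(3.2)}, $S_{i,j_1}(T) \cap S_{i+1, j_2}(T) \neq \emptyset$ is equivalent to the existence of $(a_k) \in \mathcal{A}$ and $b_k \in \Delta E_n$ satisfying
\[
qa\!\left[-\frac{1}{pq} + \sum_{k \geq 1} r_{k+1} a_k\right] = (j_2 - j_1) + \sum_{k \geq 1} b_k q^{-k}.
\]
Using \eqref{eq2.3} exactly as in the proof of Lemma \ref{th(3.2)}, the left-hand side simplifies to $X := a(1-s)/(q-p)$ (with $s \in \mathcal{S}$) when $p \neq q$, and to $X := as'/q$ (with $s' \in \mathcal{S}'$) when $p = q$. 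Writing $c := j_2 - j_1 \in \{-1, 0, 1\}$, denoting by $C(c)$ the statement that $S_{i,j_1}(T) \cap S_{i+1, j_1 + c}(T) \neq \emptyset$, and noting that the rightmost sum ranges over the interval $[-(n-1)/(|q|-1), (n-1)/(|q|-1)]$, we obtain
\[
C(c) \iff \exists\, s \in \mathcal{S}\ (\text{or } s' \in \mathcal{S}'):\ |X - c| \leq \frac{n-1}{|q|-1}.
\]

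Next, I would pin down the range of $X$. The involution $\mathcal{A}_1 \leftrightarrow \mathcal{A}_3$, combined with the central symmetry of $\mathcal{B}$ (hence of $\mathcal{Q}$), shows that $\mathcal{S}$ is symmetric about $p/q$, and similarly $\mathcal{S}'$ is symmetric about $1$. Using Propositions \ref{th(2.2)}, \ref{th(2.2.1)} and \ref{th(2.3)}, one computes $\max|1-s|/|q-p| = 1/(|q|-1)$ when $p \neq q$ and $\max|s'|/|q| = 1/(|q|-1)$ when $p = q$, so in both cases $X$ ranges over an interval symmetric about $a/q$ with half-width $|a|/(|q|(|q|-1))$. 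Consequently,
\[
C(c) \iff \left|\frac{a}{q} - c\right| \leq \frac{|a|}{|q|(|q|-1)} + \frac{n-1}{|q|-1}.
\]
In particular, $C(0)$ recovers Lemma \ref{th(3.2)}'s condition $|a| \leq |q|(n-1)/(|q|-2)$. The further involution $a \mapsto -a$ flips the sign of $X$ and swaps $C(1) \leftrightarrow C(-1)$, so we may assume $a/q \geq 0$; then $|a/q - 1| \leq |a/q + 1|$ shows $C(-1) \Rightarrow C(1)$, and the disjunction in the statement reduces to ``$C(0)$ and $C(1)$''.

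Unpacking $C(1)$: for $|a| \leq |q|$ it yields the lower bound $|a| \geq |q| - n$, while for $|a| > |q|$ it yields $|a| \leq |q|(|q|+n-2)/(|q|-2)$, which is weaker than the $C(0)$ upper bound when $|q| \geq 3$. Combining with $C(0)$ we get precisely the asserted $|q|-n \leq |a| \leq |q|(n-1)/(|q|-2)$. The main technical hurdle is the unified sign analysis, handling $p$ versus $q$, $q$ versus $0$, and the degenerate case $p = q$ under a single formula for the range of $X$. A related subtle point is verifying that every value in the full symmetric interval about $a/q$ is actually attained by $X$ (not merely the two extremes), so that the distance criterion is sufficient as well as necessary; this relies on the self-similar decomposition $\mathcal{A} = \mathcal{A}_1 \cup \mathcal{A}_2 \cup \mathcal{A}_3$, under which the pieces generated by $\mathcal{A}_1, \mathcal{A}_3$ are bridged by $\mathcal{A}_2$ through $\mathcal{Q}$. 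Sufficiency is then completed by explicit construction of sequences $(a_k) \in \mathcal{A}$ and $(b_k)$ with $b_k \in \Delta E_n$, in the same spirit as the sufficiency argument of Lemma \ref{th(3.4)}.
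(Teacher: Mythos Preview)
Your approach is essentially the same as the paper's---reduce the intersection conditions to constraints on the quantity $X$ via Propositions~\ref{th(2.2)}, \ref{th(2.2.1)}, \ref{th(2.3)}---but you package it more symmetrically by introducing the parameter $c\in\{-1,0,1\}$ and the single distance criterion $|a/q-c|\le w+r$ with $w=|a|/(|q|(|q|-1))$ and $r=(n-1)/(|q|-1)$. The paper instead treats the three intersections separately, computing $\min_{\mathcal A}|X|$ for the horizontal neighbor and $\max_{\mathcal A}|X|$ for the diagonal neighbors, and then dismisses sufficiency as ``similar to Lemma~\ref{th(3.4)}.'' Two minor points: what you call ``Lemma~\ref{th(3.2)}'s condition'' $|a|\le|q|(n-1)/(|q|-2)$ is in fact the new horizontal-neighbor condition derived here, not the $q^2(n-1)/(|q|-2)$ bound of Lemma~\ref{th(3.2)}; and the sentence ``$\max|1-s|/|q-p|=1/(|q|-1)$'' gives $\max|X|/|a|$, not the half-width $w$, so the bridge to $w$ needs one more line using the symmetry of $\mathcal S$ about $p/q$.

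The one place where your route genuinely demands more than the paper's is the ``subtle point'' you flag: your criterion $C(c)\iff|a/q-c|\le w+r$ requires, for the $\Leftarrow$ direction, that the actual range of $X$ fill the interval $[a/q-w,\,a/q+w]$, i.e.\ that $\mathcal S$ (resp.\ $\mathcal S'$) be a full interval. The paper never needs this because it works with the two endpoints $a/q\pm w$ directly, and in fact you do not need it either: once $C(0)$ is in force (i.e.\ $|a|\le|q|(n-1)/(|q|-2)$), one has $w\le r$ since $|q|\ge3$, so the target $[c-r,c+r]$ is at least as wide as $[a/q-w,a+q/w]$; hence whenever these intervals overlap one of the two \emph{endpoints} $a/q\pm w$ (which are attained, by Propositions~\ref{th(2.2)}--\ref{th(2.3)}) already lies in the target. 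This observation replaces your proposed self-similarity argument for the interval property and closes the sufficiency exactly as the paper does implicitly.
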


\begin{proof}
Analogous to the above proofs, $S_{i,j}(T)\cap S_{{i+1},j}(T)\neq \emptyset$ holds if and only if there exists a sequence $\{a_k\} \in {\mathcal A}$ such that
 \begin{align}\label{eq(3.5.2)}
 {\frac{a}{p}-qa \sum_{k \ge 1} r_{k+1}a_k} \in \left\{ \sum_{k \ge 1} j_kq^{-k} , j_k\in \Delta E_n \right\}=  \left[ -\frac{n-1}{\left | q \right |-1},\frac{n-1}{\left | q \right |-1}\right].
\end{align}
By \eqref{eq2.3}, \eqref{eq(3.5.2)} holds if and only if
\begin{equation*}
\min_{\{a_k\} \in {\mathcal A}}|{\frac{a}{p}-qa \sum_{k \ge 1} r_{k+1}a_k}|=\left\{
\begin{array}{ll}
\min_{\{a_k\} \in {\mathcal A}}|\frac{a}{q}\sum_{k \ge 1} k a_kq^{-k}| \le \frac{{n-1}}{|q|-1} \quad & p=q\\
\\
\min_{\{a_k\} \in {\mathcal A}}|\frac{a}{(q-p)}(\sum_{k \ge 1} a_kq^{-k}-1)| \le \frac{{n-1}}{|q|-1} \quad & p\neq q.
\end{array}
\right.
\end{equation*}
Using the same discussion as in Lemma \ref{th(3.2)}, we obtain that $S_{i,j}(T)\cap S_{{i+1},j}(T)\neq \emptyset$  if and only if $|a|\leq \frac{|q|(n-1)}{|q|-2}.$

Similarly, if $S_{i,j}(T)\cap S_{{i+1},{j+1}}(T)\neq \emptyset$, then there exists a sequence  $\{a_k\} \in {\mathcal A}$ such that
\begin{align*}
{\frac{a}{pq}+\frac{1}{q}-a \sum_{k \ge 1} r_{k+1}a_k} \in \left\{ \sum_{k \ge 1} j_k q^{-k-1}:  j_k\in \Delta E_n \right\}=\frac {1}{\left| q\right|  } \left[ -\frac{n-1}{\left | q \right |-1},\frac{n-1}{\left | q \right |-1}\right].
\end{align*}
Equivalently,
\begin{align}\label{eq(3.6.1)}
 \frac{a}{p}-aq \sum_{k \ge 1} r_{k+1}a_k  \in  \left [ -\frac{{\left | q \right |}+n-2}{\left | q \right |-1}, -\frac{{\left | q \right |}-n}{\left | q \right |-1}\right].
\end{align}
If $S_{i,j+1}(T)\cap S_{{i+1},{j}}(T)\neq \emptyset$,  then there exists a sequence  $\{a_k\}\in {\mathcal A}$ such that
\begin{align*}
{\frac{a}{pq}+\frac{-1}{q}-a \sum_{k \ge 1} r_{k+1}a_k} \in \left\{ \sum_{k \ge 1} j_kq^{-k-1}: j_k\in \Delta E_n \right\} =\frac {1}{\left| q\right|  } \left[ -\frac{n-1}{\left | q \right |-1},\frac{n-1}{\left | q \right |-1}\right].
\end{align*}
Equivalently,
\begin{align}\label{eq(3.7.1)}
 \frac{a}{p}-aq \sum_{k \ge 1} r_{k+1}a_k  \in  \left [ \frac{{\left | q \right |}-n}{\left | q \right |-1}, \frac{{\left | q \right |}+n-2}{\left | q \right |-1}\right].
 \end{align}
 By  Propositions \ref{th(2.2)} and \ref{th(2.2.1)}, we have
\begin{align*}
  \max_{\{a_k\} \in {\mathcal A}}|\frac{a}{p}-qa \sum_{k \ge 1} r_{k+1}a_k|=\frac{|a|}{|q|-1}.
\end{align*}
If $n\geq |q|$, then $|q|-n\leq|a|$ obviously. If $n<|q|$, then \eqref{eq(3.6.1)} and \eqref{eq(3.7.1)} imply that
\begin{align*}
  \frac{|a|}{|q|-1}=\max_{\{a_k\} \in {\mathcal A}}|\frac{a}{p}-qa \sum_{k \ge 1} r_{k+1}a_k|\geq\frac{|q|-n}{|q|-1}.
\end{align*}
Hence  $|a|\ge |q|-n$. The sufficiency is similar to Lemma \ref{th(3.4)}, we omit the details.
\end{proof}

Now by making use of the above four lemmas, we can prove our main theorem.

\begin{thm}\label{th(3.6)}
 Let $p,q,\in\mathbb{Z}$ with  $|p|, |q|\geq 2$, $a\in \mathbb{R}$, and let  $|p|+1<m<2|p|-1 $,
$n\geq (|q|+1)/2$. Let
  \begin{equation*}
 A=\begin{pmatrix}
 p& 0 \\
 -a&q
\end{pmatrix} , \quad  \mathcal {D}= E_m\times E_n.
\end{equation*}

(i) If $|q|=2$, then $T(A,\mathcal{D})$ is connected for any $a$.

(ii) If $|q|\ge 3$, then $T(A,\mathcal{D})$ is connected if and only if
\begin{align}\label{eq(3.18.1)}
|q|(|q|-n)\leq |a| \leq \frac{q^2(n-1)}{|q|-2},
\end{align}
or
\begin{align}\label{eq(3.18.2)}
|q|-n\leq |a| \leq \frac{|q|(n-1)}{|q|-2}.
\end{align}
\end{thm}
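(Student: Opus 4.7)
My plan is to apply the standard connectedness criterion of Lemma \ref{th(2.1)} to the IFS $\{S_{i,j}: 0\le i\le m-1,\,0\le j\le n-1\}$ generating $T$. By Lemma \ref{th(3.1)}, an intersection $S_{i_1,j_1}(T)\cap S_{i_2,j_2}(T)\ne\emptyset$ forces $|i_1-i_2|\le 1$, so I view the pieces as vertices of a finite $m\times n$ grid graph whose edges are exactly the nonempty intersections, falling into four types: \emph{vertical} $S_{i,j}\cap S_{i,j+1}$, \emph{horizontal} $S_{i,j}\cap S_{i+1,j}$, \emph{up-diagonal} $S_{i,j}\cap S_{i+1,j+1}$, and \emph{down-diagonal} $S_{i,j+1}\cap S_{i+1,j}$. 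Connectedness of $T$ is equivalent to connectedness of this finite grid graph.

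Part (i) is immediate: when $|q|=2$, Lemma \ref{th(3.4)} furnishes every vertical edge together with $G_i\cap G_{i+1}\ne\emptyset$ for every $i$, so the graph is trivially connected.

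For the sufficiency in part (ii), I distinguish two cases. If \eqref{eq(3.18.1)} holds, Lemma \ref{th(3.4)} supplies every vertical edge together with $G_i\cap G_{i+1}\ne\emptyset$ for each $i$, so the standard chain argument (moving vertically within a column, then jumping to the adjacent column) yields connectedness. If \eqref{eq(3.18.2)} holds, Lemma \ref{th(3.5)} supplies every horizontal edge together with either every up-diagonal or every down-diagonal edge; since a horizontal step changes $(i,j)$ by $(\pm1,0)$ and a diagonal step changes it by $(\pm1,\pm1)$, composing these two symmetric moves traverses the whole grid, giving connectedness.

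The heart of the argument is the necessity in part (ii), which I prove contrapositively. Since $m\ge|p|+2\ge 4$, connectedness forces some $G_i\cap G_{i+1}\ne\emptyset$, so Lemma \ref{th(3.2)} already gives $|a|\le q^2(n-1)/(|q|-2)$. I then split on whether the vertical edges are present. If they are, the proof of Lemma \ref{th(3.4)} identifies the additional lower bound $|a|\ge|q|(|q|-n)$ (vacuous when $n\ge|q|$), yielding \eqref{eq(3.18.1)}. If the vertical edges are absent, each column is internally disconnected, so any chain linking $S_{0,0}(T)$ and $S_{0,1}(T)$ must detour through an adjacent column via a $j$-changing edge, i.e., a diagonal. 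The main obstacle at this step is to exclude the possibility that only diagonals are available: both diagonals preserve the parity of $i+j$, so the subgraph using only diagonal edges splits into two parity components and cannot be connected; hence a horizontal edge must also exist. Lemma \ref{th(3.5)} characterizes the simultaneous existence of horizontal and a diagonal as exactly \eqref{eq(3.18.2)}, completing the necessity.
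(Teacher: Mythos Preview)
Your sufficiency argument and part (i) are correct and essentially identical to the paper's treatment via Lemmas \ref{th(3.2)}--\ref{th(3.5)}.

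The necessity argument, however, has a genuine gap. You assert that the nonempty intersections ``fall into four types'' (vertical, horizontal, and two diagonals), but Lemma \ref{th(3.1)} only guarantees $|i_1-i_2|\le 1$; nothing in the paper bounds $|j_1-j_2|$. Thus intersections $S_{i,j}(T)\cap S_{i',j'}(T)$ with $|i-i'|\le 1$ and $|j-j'|\ge 2$ are a priori possible, and your parity argument depends on excluding them. Concretely, in your Case~B (short verticals absent, so $n<|q|$ and $|a|<|q|(|q|-n)$), one can check from the formula $\max_{\{a_k\}\in\mathcal A}\bigl|\tfrac{a}{p}-aq\sum r_{k+1}a_k\bigr|=\tfrac{|a|}{|q|-1}$ that a ``long diagonal'' with $|j-j'|=2$ is not ruled out once $|a|\ge 2|q|-n-1$, and the range $2|q|-n-1\le |a|<|q|(|q|-n)$ is nonempty whenever $n\le |q|-2$. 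Such a step changes $i+j$ by $\pm 3$, hence changes parity, so it plays the role of your ``horizontal'' edge in the parity argument without actually being one; consequently you cannot conclude that a genuine horizontal edge exists, and the appeal to Lemma \ref{th(3.5)} breaks down. (Long \emph{vertical} edges, by contrast, can be excluded in Case~B since $|q|(2|q|-n-1)>|q|(|q|-n)>|a|$, so your claim that columns are internally disconnected is salvageable; it is the inter-column long steps that cause the problem.)

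The paper's necessity proof avoids this difficulty by taking a different route: rather than assuming connectedness and classifying edges, it splits the complement of \eqref{eq(3.18.1)}$\cup$\eqref{eq(3.18.2)} into three explicit sub-regions of $|a|$ and in each one exhibits a concrete disconnection. In the most delicate sub-region $\tfrac{|q|(n-1)}{|q|-2}<|a|<|q|(|q|-n)$ it uses the sign of $aq$ together with Propositions \ref{th(2.2)}--\ref{th(2.3)} to show that $\tfrac{a}{p}-aq\sum r_{k+1}a_k$ has a fixed sign over all of $\mathcal A$, which forces every inter-column intersection (of any $j$-difference) to go in a single vertical direction; this isolates the corner piece $S_{0,n-1}(T)$ (or $S_{0,0}(T)$). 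That sign argument uniformly handles all $|j-j'|\ge 1$, which is exactly what your parity argument cannot do.
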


\begin{proof}
(i) is trivial by Lemma \ref{th(3.4)}. It suffices to show  (ii). If \eqref{eq(3.18.1)} holds, then
\begin{eqnarray}\label{eq(3.20)}
S_{i,j}(T)\cap S_{i,j+1}(T)\neq \emptyset \quad \text{for any} \quad i,j,
 \end{eqnarray}
and $G_i(T)\cap G_{i+1}(T)\neq \emptyset$  by Lemma \ref{th(3.4)}. Since $G_i(T)=\cup_{j=0}^{n-1} S_{i,j}(T)$, for each $0\leq i\leq m-1$, there exist $0\leq j_i, k_i \leq n-1$ such that
\begin{eqnarray}\label{eq(3.21)}
S_{i,j_i}(T)\cap S_{i+1,k_i}(T)\neq \emptyset.
 \end{eqnarray}
We use \eqref{eq(3.20)}, \eqref{eq(3.21)} to select a sequence $\{\psi_i\}_{i=1}^N$ from $\{S_{i,j}\}$ in the following order:

$S_{0,0}, S_{0,1},\dots,S_{0,n-2}, S_{0,n-1},S_{0,n-2},\dots, S_{0,j_0}, S_{1,k_0},\dots, S_{1,n-1}, S_{1,n-2},\dots, S_{1,0}, \\
S_{1,1},\dots, S_{1,j_1}, S_{2,k_1},\dots,S_{2,n-1}, S_{2,n-2},\dots,S_{2,0}, S_{2,1},\dots, S_{2,j_2},\dots, S_{m-1,n-1}.$

\noindent Then each $S_{i,j}$ appears at least once in the sequence $\{\psi_i\}_{i=1}^N$ and
$$
\psi_i(T)\cap\psi_{i+1}(T)\neq\emptyset, \qquad  1\leq j \leq N.
$$
That implies that $T$ is connected by Lemma  \ref{th(2.1)}.

If \eqref{eq(3.18.2)} holds, by Lemma \ref{th(3.5)}, then both $S_{i,j}(T)\cap S_{i+1,j}(T)\neq \emptyset$ and  $S_{i,j}(T)\cap S_{i+1,j+1}(T)\neq \emptyset$ hold, or both $S_{i,j}(T)\cap S_{i+1,j}(T)\neq \emptyset$ and  $S_{i,j+1}(T)\cap S_{i+1,j}(T)\neq \emptyset$ hold.

For the first case that $S_{i,j}(T)\cap S_{i+1,j}(T)\neq \emptyset, S_{i,j}(T)\cap S_{i+1,j+1}(T)\neq \emptyset$, we can select a sequence $\{\psi_i\}_{i=1}^N$ from $\{S_{i,j}\}$ in the following order:

\noindent (a) when $m$ is even

$S_{0,n-1},S_{1,n-1},S_{0,n-2},S_{1,n-2},\dots,S_{0,0}, S_{1,0},S_{2,0},S_{1,0},S_{2,1},S_{1,1}, S_{2,2},\cdots
S_{1,n-2},S_{2,n-1}$\\
$\cdots ,S_{m-2,n-1},S_{m-1,n-1},S_{m-2,n-2},S_{m-1,n-1}, S_{m-2,n-3},\cdots S_{m-2,0},S_{m-1,0}.$

\noindent (b) when $m$ is odd

$S_{0,n-1},S_{1,n-1},S_{0,n-2},S_{1,n-2},\dots,S_{0,0}, S_{1,0},S_{2,0},S_{1,0},S_{2,1},S_{1,1}, S_{2,2},\cdots
S_{1,n-2},S_{2,n-1}$\\
$\cdots ,S_{m-2,0}, S_{m-1,0},S_{m-2,0},S_{m-1,1},S_{m-2,1},S_{m-1,2}, S_{m-2,2},\cdots S_{m-2,n-2},S_{m-1,n-1}.$

\noindent Each $S_{i,j}$ appears at least once in the sequence $\{\psi_i\}_{i=1}^N$ and
$$
\psi_i(T)\cap\psi_{i+1}(T)\neq\emptyset, \qquad  1\leq j \leq N.
$$ Hence $T$ is connected by Lemma \ref{th(2.1)}.

Using the similar way, we can handle the other case that $S_{i,j}(T)\cap S_{i+1,j}(T)\neq \emptyset $ and  $S_{i,j+1}(T)\cap S_{i+1,j}(T)\neq \emptyset$.

For the necessary part, we prove it by contrapositive.

(a)  If $|a|>\frac{q^2(n-1)}{|q|-2}$, then $G_0(T)\cap G_i(T)=\emptyset$ for $1\leq i \leq m-1$ by Lemmas \ref{th(3.1)} and  \ref{th(3.2)},  implying $T$ is disconnected.

(b) If $0 \leq|a|< |q|-n$,  we show $S_{i,t}(T)\cap S_{i,l}(T)=\emptyset$ for any $|l-t|\geq 1$. If otherwise, $S_{i,t}(T)\cap S_{i,l}(T)\neq \emptyset$ holds for some $t, l$ with $|l-t|\geq 1$, as \eqref{eq(3.8)}, then there exists a sequence $\{a_k\} \in {\mathcal B}$  such that
$${qa\sum_{k \ge 1} {r_{k+1}a_k}}  \in  \left [ -\frac{(|q|-1)(l-t)+(n-1)}{|q|-1}, -\frac{(|q|-1)(l-t)-(n-1)}{\left | q \right |-1}\right].$$
That implies
\begin{align}\label{eq(3.14)}
 \left|\frac{a}{q(|q|-1)} \right|=\max_{\{a_k\} \in {\mathcal B}}|{qa\sum_{k \ge 1} {r_{k+1}a_k}}|\geq\frac{(|q|-1)|l-t|-(n-1)}{\left | q \right |-1},
\end{align} which is impossible as $0\leq a<|q|-n$.  Hence  $S_{i,n-1}(T)\cap S_{i,j}(T)=\emptyset$ for $0\leq j\leq n-2$.

Similarly we can show  $S_{i,u}(T)\cap S_{i+1,v}(T)= \emptyset$  for any $|v-u|\geq 1$. If otherwise,  $S_{i,u}(T)\cap S_{i+1,v}(T)\ne \emptyset$  for some $|v-u|\geq 1$, then there exists a sequence $\{a_k\} \in {\mathcal A}$  such that
\begin{align}\label{eq(3.15)}
 \frac{a}{p}-aq \sum_{k \ge 1} r_{k+1}a_k  \in  \left [ -\frac{(|q|-1)(v-u)+(n-1)}{|q|-1}, -\frac{(|q|-1)(v-u)-(n-1)}{\left | q \right |-1}\right].
\end{align} That implies
\begin{align*}
 \frac{|a|}{|q|-1} =\max_{\{a_k\} \in {\mathcal A}}|\frac{a}{p}-aq \sum_{k \ge 1} r_{k+1}a_k |\geq\frac{(|q|-1)|v-u|-(n-1)}{\left | q \right |-1},
\end{align*} which is also impossible as $0\leq a<|q|-n$. Hence we have  $S_{i,n-1}(T)\cap S_{i+1,j}(T)=\emptyset$ for $0\leq j\leq n-2$.

Let $\Omega_1=\bigcup_{i=0}^{m-1}S_{i,n-1}(T), \Omega_2=\bigcup_{i=0}^{m-1}\bigcup_{j=0}^{n-2}S_{i,j}(T)$, the above arguments  and Lemma \ref{th(3.1)} yield that $\Omega_1\cap\Omega_2=\emptyset$, proving that $T$ is  disconnected.

(c) If $\frac{|q|(n-1)}{|q|-2}<|a|<|q|(|q|-n)$, then we have $|a|<|q|(|q|-n)$ and \eqref{eq(3.14)} imply that $S_{i,t}(T)\cap S_{i,l}(T)=\emptyset$ for $|l-t|\geq 1$; and $\frac{|q|(n-1)}{|q|-2}<|a|$ implies $S_{i,j}(T)\cap S_{i+1,j}(T)=\emptyset$ by using the first part of the proof of Lemma \ref{th(3.5)}. Moreover, if $aq>0$,  Propositions \ref{th(2.2)} and \ref{th(2.2.1)} imply that
\begin{align*}
-\frac{|a|}{|q|-1}\leq\frac{a}{p}-qa \sum_{k \ge 1} r_{k+1}a_k\leq -\frac{|a|(|q|-2)}{|q|(|q|-1)}<0
\end{align*}
for all $\{a_k\} \in {\mathcal A}$.  Then \eqref{eq(3.15)} does not hold if $v-u\leq -1$ as the interval of \eqref{eq(3.15)} lies in ${\mathbb R}^+$. Hence, $S_{0,n-1}(T)\cap S_{1,j}(T)=\emptyset$ for $0\leq j\leq n-2$. By the above discussion and  Lemmas \ref{th(3.1)}, it concludes that $S_{0,n-1}(T)\cap S_{i,j}(T)=\emptyset$ for all $i,j$ with $(i,j)\neq(0,n-1)$.  If $aq<0$, by using the similar argument we can prove $S_{0,0}(T)\cap S_{i,j}(T)=\emptyset$ for  all $i,j$ with $(i,j)\neq(0,0)$. Therefore, $T$ is disconnected.
\end{proof}

\section{\bf Tile digit set}

For the affine pair $(A, {\mathcal D})$ as in (\ref{eq 1.2}). Let $D_{A,k}:=\{\sum_{i=0}^{k-1}A^id_{j_i} : d_{j_i}\in\mathcal {D}\}$ for $k\ge 1$ and $D_{A,\infty}=\bigcup_{k=1}^\infty D_{A,k}$. The following are well-known equivalent conditions for the self-affine set $T(A, {\mathcal D})$ to be a tile due to  Lagarias and Wang.

\begin{lemma}[\cite{LW1}]\label{th4.1}
That $T:=T(A,\mathcal {D})$ is a self-affine tile is equivalent to either one of the following conditions:

(i)\ $\mu(T)>0$, where $\mu$ is the lebesgue measure;

(ii) \ $\overline{T^\circ}=T$ and $\mu(\partial T)=0$;

(iii) \ $\# D_{A,k}=|\det(A)|^{k}$ for all $k\geq 1$ and $D_{A,\infty}$ is a uniformly discrete set, i.e., there exists $\delta>0$ such that $\|u-v\|>\delta$ for any distinct $u,v \in D_{A,\infty}$.
\end{lemma}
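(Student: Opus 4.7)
The plan is to prove the three conditions are equivalent by establishing the cycle $(i)\Rightarrow(ii)\Rightarrow(iii)\Rightarrow(i)$, which is the standard strategy originating in Lagarias and Wang. Throughout, write $s=\#\mathcal D$ and note that $|\det(A^{-1})|=1/|\det A|$, so taking Lebesgue measure in the set equation $T=\bigcup_{d\in\mathcal D}A^{-1}(T+d)$ gives the fundamental chain of inequalities
\begin{equation*}
\mu(T)\;\le\;\sum_{d\in\mathcal D}\mu\bigl(A^{-1}(T+d)\bigr)\;=\;\frac{s}{|\det A|}\mu(T),
\end{equation*}
which is the engine driving all three implications.

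For $(i)\Rightarrow(ii)$: assuming $\mu(T)>0$, the inequality above forces $s\ge|\det A|$, and combined with the standing hypothesis $s=|\det A|$ it becomes an equality, so the union is a.e.\ disjoint: $\mu\bigl(A^{-1}(T+d)\cap A^{-1}(T+d')\bigr)=0$ for $d\ne d'$. Iterating the set equation $k$ times, $T$ is an a.e.\ disjoint union of $s^k$ affine copies of itself with diameter tending to $0$. A standard argument (cf.\ Bandt or Lagarias--Wang) shows that any point of $T$ having a small neighborhood meeting infinitely many such pieces lies in a set of measure zero; hence $\mu(\partial T)=0$ and, by Baire category applied to the iterated covers, $T=\overline{T^\circ}$.

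For $(ii)\Rightarrow(iii)$: apply the set equation iterated $k$ times, $T=\bigcup_{u\in D_{A,k}}A^{-k}(T+u)$. Since $\mu(\partial T)=0$ and $T$ has positive measure, the pieces must have essentially disjoint interiors, giving $\mu(T)\le(s^k/|\det A|^k)\mu(T)$ with equality, hence $\#D_{A,k}=|\det A|^k$. Uniform discreteness of $D_{A,\infty}$ then follows because if $u_k\in D_{A,j_k}, v_k\in D_{A,\ell_k}$ were arbitrarily close with $u_k\ne v_k$, interior points of two distinct tiles $A^{-j_k}(T^\circ+u_k)$ and $A^{-\ell_k}(T^\circ+v_k)$ could be pushed forward by $A^{\max(j_k,\ell_k)}$ to produce coinciding interiors of tiles at a common level, contradicting the exact count $\#D_{A,k}=|\det A|^k$.

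For $(iii)\Rightarrow(i)$, which is the main obstacle: one must manufacture positive Lebesgue measure out of a purely combinatorial hypothesis. The approach is to observe that $A^{-k}D_{A,k}$ sits within a fixed $\text{diam}(T)$-neighborhood of $T$ (independent of $k$), carries $|\det A|^k$ points, and, after pullback by $A^k$, is $\delta$-separated. For each $k$, define the discrete probability measure $\nu_k=|\det A|^{-k}\sum_{x\in A^{-k}D_{A,k}}\delta_x$; it is supported in a bounded neighborhood of $T$ and has total mass $1$. The uniform discreteness of $D_{A,\infty}$ implies a uniform upper bound on the $\nu_k$-mass of any small ball (by controlling how many lattice-like points fit in a ball), so $\{\nu_k\}$ is tight and any weak-$*$ limit $\nu$ is absolutely continuous with respect to Lebesgue measure on $T$ with a bounded density. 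Since $\nu$ has mass $1$ and is supported on $T$, Lebesgue measure of $T$ is positive. The hard step here is the density bound: verifying that the packing inequality coming from $\delta$-separation survives the scaling $A^k$ and translates into a uniform $L^\infty$ bound on the limiting density, rather than concentrating on a lower-dimensional set.
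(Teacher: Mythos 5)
This lemma is not proved in the paper at all: it is quoted, with attribution, from Lagarias and Wang \cite{LW1} (their Theorem~1.1 together with the counting/discreteness criterion for positive measure), so there is no in-paper argument to measure your proposal against. Your cycle $(i)\Rightarrow(ii)\Rightarrow(iii)\Rightarrow(i)$ and the three driving ideas --- the measure identity forcing a.e.\ disjointness of the level-$k$ pieces, the packing-plus-interior-ball argument for uniform discreteness, and the weak-$*$ limit of the normalized counting measures $\nu_k$ with a ball-counting bound yielding absolute continuity --- are exactly the Lagarias--Wang strategy in outline, and the logical skeleton is sound.

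However, the two genuinely hard steps are asserted rather than proved. In $(i)\Rightarrow(ii)$, the conclusion $T=\overline{T^\circ}$ requires first showing $T^\circ\neq\emptyset$; ``Baire category applied to the iterated covers'' presupposes that countably many translates of $T$ cover an open set (in \cite{LW1} this comes from constructing a self-replicating covering of $\mathbb{R}^n$ by translates of $T$ along $D_{A,\infty}$), and establishing that covering is the real content of the implication. Likewise, $\partial T$ is not the set of points whose neighborhoods meet infinitely many pieces; the measure-zero statement one actually needs is $\partial T\subseteq\bigcup_{t\neq 0}\bigl(T\cap(T+t)\bigr)$ over the (locally finite) translation set, each term of which is null by the tiling. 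In $(iii)\Rightarrow(i)$, the density bound you flag as ``the hard step'' does go through, but the reason deserves stating: since every eigenvalue of $A$ exceeds $1$ in modulus, every singular value of $A^k$ grows geometrically, so the $\delta/2$-neighborhood of the ellipsoid $A^kB(x,r)$ has volume at most $(1+o(1))\,|\det A|^k\mu(B(x,r))$ as $k\to\infty$, and the volume-packing count gives $\#\bigl(D_{A,k}\cap A^kB(x,r)\bigr)\le C\delta^{-n}|\det A|^k r^n$ for large $k$, which is the uniform bound $\nu(B(x,r))\le Cr^n$ you need. Finally, in $(ii)\Rightarrow(iii)$ the clean route to uniform discreteness is to note $0\in\mathcal D$, hence $D_{A,j}\subseteq D_{A,k}$ for $j\le k$, so any two points of $D_{A,\infty}$ lie in a common $D_{A,k}$; since $T+D_{A,k}$ is a measure-theoretic packing and $T$ contains a ball of radius $r$, distinct points are at distance at least $2r$ --- this replaces your somewhat garbled ``push forward by $A^{\max(j_k,\ell_k)}$'' argument. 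With those three repairs the proposal becomes a faithful reconstruction of the cited proof.
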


Let $
A=\begin{pmatrix}
 p& 0 \\
 -a&q
\end{pmatrix},
$
then
\begin{equation}\label {eq4.1}
A^{k}=\begin{pmatrix}
 p^{k} &0\\
R_ka&q^{k}
\end{pmatrix},\quad k\geq 1
\end{equation}
where
\begin{equation*}
R_k:=\left\{
\begin{array}{ll}
(p^{k}-q^{k})/(q-p)& p\neq q \\
-kq^{k-1} & p=q.
\end{array}
\right.
\end{equation*}

\begin{pro}\label{th(4.4)}
Let $|p|, |q|\geq 2$ be integers and $a\in \mathbb{R}$, let $E_{m}=\{0,1,\dots, m-1\}$, $E_{n}=\{0,1,\dots, n-1\}$ with $|pq|=mn$.  Let
\begin{equation*}
 A=\begin{pmatrix}
 p& 0 \\
 a&q
\end{pmatrix}  \quad \text{and}\quad  \mathcal{D}=E_{m}\times E_{n}.
\end{equation*} Then we have

(i)   $m<|p|$,  $T(A,\mathcal {D})$ is not a tile.

(ii)  $m=|p|$, $T(A,\mathcal {D})$ is  a tile.

(iii)  $m> |p|$,

  \ \ \ (a) If $a=0$, then  $T(A,\mathcal {D})$ is not  a tile;

  \ \ \ (b) If $a\in {\mathbb Z}\setminus \{0\}$  and $2n-1\geq |q|$,  then $T(A,\mathcal {D})$ is not a tile.
\end{pro}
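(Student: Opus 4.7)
The plan is to invoke Lemma \ref{th4.1}(iii), which reduces tilehood to $\#D_{A,k}=|\det A|^k=(mn)^k$ for every $k$ together with uniform discreteness of $D_{A,\infty}$. A quick induction from \eqref{eq4.1} shows every element of $D_{A,k}$ has first coordinate $X=\sum_{i=0}^{k-1}p^is_i$ and second coordinate $Y=a\sum_{i=0}^{k-1}R_is_i+\sum_{i=0}^{k-1}q^it_i$ for some $s_i\in E_m$, $t_i\in E_n$. Writing $\sigma_i=s_i-s_i'$ and $\tau_i=t_i-t_i'$, a collision is a nonzero $(\sigma,\tau)\in(\Delta E_m)^k\times(\Delta E_n)^k$ satisfying $\sum p^i\sigma_i=0$ and $a\sum R_i\sigma_i+\sum q^i\tau_i=0$.

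Cases (i) and (ii) are pure counting. In case (i), $m<|p|$ (so $n>|q|$); a standard base-$p$ argument ($|\sigma_i|\le m-1<|p|$ forces $\sigma_i\equiv 0\pmod p$ hence $\sigma=0$) makes $s\mapsto X$ injective, giving $m^k$ distinct $X$'s. For each fixed $s$, the $Y$-values come from $\sum q^it_i$, which, because $n>|q|$, takes only $(n-1)(|q|^k-1)/(|q|-1)+1$ distinct values (the set of integers in its range). Thus $\#D_{A,k}<(mn)^k$ already for $k=2$ and $T$ is not a tile. In case (ii), $m=|p|$ and $n=|q|$; the same base-$p$ and base-$q$ uniqueness give injectivity of both $s\mapsto X$ and (for fixed $s$) $t\mapsto Y$, yielding $\#D_{A,k}=(mn)^k$. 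Uniform discreteness is automatic: any two distinct elements of $D_{A,\infty}$ either have $|X-X'|\ge 1$ (integer $X$'s) or share the same $s$-sequence and then differ in $Y$ by a nonzero integer, hence by at least $1$. So $T$ is a tile.

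Case (iii)(a) is again counting: with $a=0$ the set $D_{A,k}$ decouples as a Cartesian product, and since $m>|p|$ the $X$-set has only $(m-1)(|p|^k-1)/(|p|-1)+1<m^k$ elements for $k\ge 2$. Case (iii)(b) is the main difficulty and calls for an explicit collision. I would take $\sigma=(-p,1,0,\dots,0)\in(\Delta E_m)^k$, admissible since $m>|p|$; because $R_0=0$ and $R_1=1$, this immediately gives $\sum p^i\sigma_i=0$ and $\sum R_i\sigma_i=1$, reducing the problem to finding $\tau\in(\Delta E_n)^k$ with $\sum q^i\tau_i=-a$. The hypothesis $2n-1\ge|q|$ is precisely what makes $\Delta E_n$ contain a complete set of residues mod $|q|$, and a short induction on $k$ then shows $\{\sum_{i=0}^{k-1}q^i\tau_i:\tau_i\in\Delta E_n\}$ realizes every integer of absolute value $\le(n-1)(|q|^k-1)/(|q|-1)$. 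Choosing $k$ large enough that this exceeds $|a|$ produces the desired $\tau$, hence a genuine collision; so $\#D_{A,k}<(mn)^k$ for this $k$ and $T$ is not a tile. The main obstacle is exactly this last step: because $a\in\mathbb{Z}\setminus\{0\}$ puts $\mathcal{D}\subset\mathbb{Z}^2$ (so $D_{A,\infty}$ is automatically uniformly discrete), one cannot rule out being a tile by any measure-zero or non-discreteness argument and must instead manufacture the strict inequality $\#D_{A,k}<(mn)^k$ by hand, for which the assumption $2n-1\ge|q|$ is the precise surjectivity hypothesis that makes the construction close.
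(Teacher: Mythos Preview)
Your proposal is correct and follows essentially the same strategy as the paper: both invoke the Lagarias--Wang criterion (Lemma~\ref{th4.1}) and, for the crucial case (iii)(b), construct the identical explicit collision $\sigma=(\pm p,\mp 1,0,\dots,0)$ and then use the hypothesis $2n-1\ge|q|$ to guarantee that $\tau\mapsto\sum q^i\tau_i$ hits the required integer $-a$ for large $k$. The only cosmetic differences are that for (ii) the paper simply cites \cite{DL11} while you supply a self-contained injectivity/uniform-discreteness argument, and for (iii)(a) the paper uses the measure criterion (the $y$-projection of $T$ has Lebesgue measure zero since $n<|q|$) rather than your counting argument on the $X$-coordinates.
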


\begin{proof}
By the definition of $D_{A, k+1}$ (here $k\ge 0$) and (\ref{eq4.1}), it is easy to get that
\begin{eqnarray*}
D_{A,k+1}=\left\{\dbinom{\sum_{i=0}^k x_ip^i}
{\sum_{i=0}^k y_iq^i+a\sum_{i=1}^k R_ix_i}:x_i\in E_{m},y_i\in E_{n}\right\}.
\end{eqnarray*}
Then $\# D_{A,k+1}=|pq|^{k+1}$ if and only if
\begin{eqnarray}\label{eq(4.4)}
\dbinom{\sum_{i=0}^k x_ip^i}{\sum_{i=0}^k y_iq^i+a\sum_{i=1}^k R_ix_i}\neq \dbinom{\sum_{i=0}^k x'_ip^i}{\sum_{i=0}^k y'_iq^i+a\sum_{i=1}^k R_ix'_i}
\end{eqnarray}
for different sequences $\{x_0,x_1,\dots, x_k, y_0,y_1,\dots,y_k\}$ and $\{x'_0,x'_1,\dots, x'_k, y'_0,y'_1,\dots,y'_k\}$.

(i) If $m<|p|$, then $n>|q|$. We consider $D_{A,2}$ by letting $(x_0,x_1, y_0,y_1)=(0,0,q,0)$ and $(x'_0,x'_1,y'_0,y'_1)=(0,0,0,1)$, then
\begin{eqnarray*}
\dbinom{x_0+x_1p}{y_0+y_1q+R_1x_1}= \dbinom{x'_0+x'_1p}{y'_0+y'_1q+R_1x'_1}=\dbinom{0}{q}.
\end{eqnarray*} Hence $\# D_{A,2}<|pq|^{2}$ and $T(A,\mathcal {D})$ is not a tile by Lemma \ref{th4.1}.

(ii) If $m=|p|$, then $n=|q|$.  $T(A,\mathcal {D})$ is always a tile \cite{DL11}.

(iii) If $m>|p|$, then $n<|q|$. If $a=0$, then the set $\{y\in{\mathbb R}: (x,y)^t\in T \ \text{for some } x\in{\mathbb R}\}$ has one-dimensional Lebesgue measure zero, so $T$ can not be a tile. Hence (a) is true.

(b) Let $(x_0,x_1,\dots, x_k)=(p,0,\dots,0)$ and $(x'_0,x'_1,\dots, x'_k)=(0,1,0,\dots,0)$ where $k\geq 2$. Then
\begin{eqnarray*}
\left\{ \begin{aligned}
(x_0-x'_0)+(x_1-x'_1)p+\cdots+(x_k-x'_k)p^{k}&=0 \\
a(R_1(x_1-x'_1)+\cdots+R_k(x_k-x'_k))&=-a.
\end{aligned} \right.
\end{eqnarray*}
Since $2n-1\geq |q|$,  the set
\begin{eqnarray*}
\left\{(y_0-y'_0)+(y_1-y'_1)q+\cdots+(y_k-y'_k)q^k: \   y_i,y'_i\in E_{n} \right\}
\end{eqnarray*}
consists of  the integers in the interval $[-\frac{(n-1)(|q|^{k+1}-1)}{|q-1|},\frac{(n-1)(|q|^{k+1}-1)}{|q-1|}]$. Let $k_0$ is
the least number such that $\frac{(n-1)(|q|^{k_0+1}-1)}{|q-1|}\geq |a|$. Then there exist two sequences
$\{y_0,y_1,\dots, y_{k_0}\}$ and $\{y'_0,y'_1,\dots, y'_{k_0}\}$ such that
$$(y_0-y'_0)+(y_1-y'_1)q+\cdots+(y_{k_0}-y'_{k_0})q^{k_0}=a.$$
Thus we find two different sequences $\{x_0,x_1,\dots, x_{k_0}, y_0,y_1,\dots,y_{k_0}\}$ and \\
 $\{x'_0,x'_1,\dots, x'_{k_0}, y'_0,y'_1,\dots,y'_{k_0}\}$ such that (\ref{eq(4.4)}) does not hold for the $k_0$. Therefore, $\# D_{A,k_0+1}<|pq|^{k_0+1}$ and $T(A,\mathcal {D})$ is not a tile.
\end{proof}

\bigskip

\end{document}